\numberwithin{equation}{section}
\DeclareMathOperator{\Jac}{Jac}
\DeclareMathOperator{\Nij}{Nij}
\title[Generalized almost contact and generalized Sasakian structures]{Generalized almost contact structures and generalized Sasakian structures}
\author{Ken'ichi Sekiya }
\email{kenichisekiya.g@gmail.com}
\subjclass[2010]{53D18, 53D10}
\begin{document}
	\theoremstyle{definition}
	\newtheorem{defn}{Definition}[section]
	\newtheorem{thm}[defn]{Theorem}
	\newtheorem{prop}[defn]{Proposition}
	\newtheorem{lem}[defn]{Lemma}
	\newtheorem{cor}[defn]{Corollary}
	\newtheorem{fac}{Fact}[section]
	\newtheorem{exam}[defn]{Example}
	\newtheorem{conj}{Conjecture}
	\newtheorem{rem}[defn]{Remark}

\maketitle

\begin{abstract}
We introduce generalized almost contact structures which admit the $B$-field transformations on odd dimensional manifolds.
We provide definition of generalized Sasakain structures from the view point of the generalized almost contact structures.
We obtain a generalized Sasakian structure on a non-compact manifold which does not arise as a pair of ordinary Sasakian structures.
However we show that a generalized Sasakian structure on compact $3$-dimensional manifold is equivalent to a pair of Sasakian structures with the same metric.
Finally we extend a definition of a generalized almost contact structure.
\end{abstract}

\section{introduction}

Both generalized complex structures and generalized K\"ahler structures are structures on even dimensional manifolds.
It is natural to ask what is an analog of generalized geometry on odd dimensional manifolds.
Vaisman introduced generalized F-structures and generalized almost contact structures \cite{Vais, Vais3}.
He also defined generalized Sasakian structures from the view point of generalized K\"ahler structures.
Poon and Wade studied integrability conditions of generalized almost contact structures and gave nontrivial examples on the three-dimensional Heisenberg group and its cocompact quotients \cite{PoWa}.
Vaisman showed that a generalized Sasakian structure appears as a pair of almost contact structures \cite{Vais}.
However, examples of generalized Sasakian structures which do not arise as a pair of Sasakian structures were not known.

The purpose of this paper is to investigate generalized geometry on odd dimensional manifolds.
We introduce the new notion of generalized almost contact structures
 which includes the one in \cite{Vais}, \cite{PoWa} as special cases. 
We use two sections $E_+$ and $E_-$ of $TM\oplus T^*M$ to define
generalized almost contact structures which admit $B$-field transformations naturally.
An almost contact structure is a triple $(\varphi,\xi,\eta)$, where $\varphi$ is an endomorphism of $TM$, $\xi \in TM$ and $\eta \in T^{*}M$ which satisfies
\begin{equation*}
\eta(\xi)=1,\quad \varphi\circ\varphi=-id+\eta\otimes\xi,
\end{equation*}
where $id$ denotes the identity map of $TM$.
An almost contact structure gives rises to an almost complex structure $I$ on the cone $C(M)=M\times \mathbb{R}_{>0}$,
\begin{equation*}
I=\varphi+\eta\otimes r\frac{\partial}{\partial r}-\frac{1}{r}dr\otimes\xi,
\end{equation*}
where $r$ denotes the coordinate on $\mathbb{R}_{>0}$.
We define a generalized almost contact structure to be a triple ($\Phi,E_+, E_-)$ by replacing $\varphi$ by an endomorphism $\Phi$ of $TM\oplus T^{*}M$ and $\xi,\eta$ by sections $E_+, E_-$ of $TM\oplus T^{*}M$, respectively
which satisfy 
\begin{gather*}
\Phi+\Phi^{*}=0,\\
2\langle E_{+},E_{-}\rangle=1,\;\langle E_{\pm},E_{\pm} \rangle=0,\\
\Phi\circ\Phi=-id+E_{+}\otimes E_{-}+E_{-}\otimes E_{+}
\end{gather*}
 (see Definition 3.1
for more detail).
By an analogue to the case of almost contact structures,
we define bundle endomorphisms to construct generalized complex structures on the cone $C(M)$.
Putting
\begin{equation*}
\Psi(E_{+},E_{-})
=E_{-}\otimes r\frac{\partial}{\partial r}-r\frac{\partial}{\partial r}\otimes E_{-}+E_{+}\otimes \frac{1}{r}dr -\frac{1}{r}dr\otimes E_{+},
\end{equation*}
it follows that
\begin{equation*}
\Phi+\Psi(E_{+},E_{-})
\end{equation*}
is generalized almost complex structure on $C(M)$.
In Sasakian geometry, the Riemannian cone metric $\tilde{g}=dr^{2}+r^{2}g$ on $C(M)$ is, by definition, a K\"ahler metric.
This suggests that
\begin{equation*}
R(\Phi+\Psi(E_{+},E_{-}))R^{-1}
\end{equation*}
is more important generalized almost complex structures rather than $\Phi+\Psi(E_{+},E_{-})$ when we pursue an analogy of  Sasakian geometry, 
where $R$ denotes an element of  the special orthogonal group SO$(TM\oplus T^*M)$ given by 
$$
R(X+\alpha)=r^{-1}X+r\alpha, \qquad X\in TM, \quad \alpha\in T^*M.
$$
From the view point of generalized almost contact structures, we define a generalized Sasakian structure.
We show that on a compact connected 3-dimensional manifold a generalized Sasakian structure is equivalent to a pair of Sasakian structures with the same metric (Theorem \ref{th:GS on 3-mfds}).
We obtain a non-compact example of a generalized Sasakian structure which does not arise as a pair of Sasakian structures (Theorem \ref{ex:GS, non-Sasakian pair}).

Finally we introduce an extended definition of generalized almost contact structure.
Although generalized almost contact structures admit $B$-field transformations,
generalized almost complex structures $R(\Phi+\Psi(E_{+},E_{-}))R^{-1}$ don't admit $B$-field transformations by $2$-forms $2rdr\wedge\kappa$ $(\kappa \in T^{*}M)$.
To admit $B$-field transformations on cone, we define a generalized $f$-almost contact structure.
Then we can naturally define two transformations of generalized $f$-almost contact structures.
One is a correspondence with $B$-field transformations on cone.
We show that the other is a correspondence with a cross term of generalized Riemannian metric on cone.

{\bf Acknowledgment.}
The author would like to thank Professor R. Goto for his many valuable suggestions and comments.

\section{generalized complex structures}
\label{sec:gcpxs}

In this section we give a brief explanation of generalized complex structures.
Let $M$ be an even dimensional smooth manifold.
The space of sections of the vector bundle $TM\oplus T^{*}M\rightarrow M$ is endowed with the following $\mathbb{R}$-bilienar operations. 
\begin{itemize}
\item
A symmetric bilinear form $\langle -,- \rangle$ is defined by
\begin{equation*}
\langle X+\alpha,Y+\beta \rangle=\frac{1}{2}(\iota_{X}\beta+\iota_{Y}\alpha).
\end{equation*}

\item
The Courant bracket $\llbracket -,-\rrbracket$ is a skew-symmetric bracket,
\begin{equation*}
\llbracket X+\alpha,Y+\beta \rrbracket=[X,Y]+\mathcal{L}_{X}\beta-\mathcal{L}_{Y}\alpha-\frac{1}{2}d(\iota_{X}\beta-\iota_{Y}\alpha),
\end{equation*}
where $X,Y \in TM$ and $\alpha,\beta \in T^{*}M$.
\end{itemize}

A subbundle is Courant involutive
if the space of sections of the subbundle is closed under the Courant bracket.

\begin{defn}\cite{Gualt-d}
A generalized almost complex structure on $M$ is an endomorphism of the direct sum
$TM\oplus T^{*}M$ which satisfies two conditions,
\begin{equation*}
\mathcal{J}+\mathcal{J}^{*}=0,\qquad \mathcal{J}^{2}=-id,
\end{equation*}
where $\mathcal{J}^{*}$ is defined by $\langle \mathcal{J}A,B\rangle=\langle A,\mathcal{J}^{*}B\rangle$ for any $A,B \in \Gamma(TM\oplus T^{*}M)$.
Let $L$ be the $+\sqrt{-1}$-eigenspace of $\mathcal{J}$ in $TM\oplus T^{*}M$.
If $L$ is Courant involutive, then $\mathcal{J}$ is called a generalized complex structure.
\end{defn}

The following are well known.

\begin{lem}\cite{Gualt-d}\label{L-isot}
$L$ is a maximal isotropic subspace.
\end{lem}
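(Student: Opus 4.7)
The plan is to establish two things: that $L$ is isotropic with respect to the bilinear form $\langle-,-\rangle$ (extended $\mathbb{C}$-bilinearly to the complexification), and that $L$ has the maximal possible dimension for an isotropic subspace.

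For isotropy, I would pick arbitrary $a,b \in L$ so that $\mathcal{J}a = \sqrt{-1}\,a$ and $\mathcal{J}b = \sqrt{-1}\,b$, and combine the defining relation $\langle \mathcal{J}a,b\rangle = -\langle a,\mathcal{J}b\rangle$ (from $\mathcal{J}+\mathcal{J}^*=0$) with the eigenvalue condition. This immediately gives $\sqrt{-1}\langle a,b\rangle = -\sqrt{-1}\langle a,b\rangle$, so $\langle a,b\rangle = 0$. This step is essentially a one-line calculation.

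For maximality, I would use $\mathcal{J}^2 = -\mathrm{id}$ to decompose the complexified bundle $(TM\oplus T^*M)\otimes\mathbb{C}$ as a direct sum of the $\pm\sqrt{-1}$-eigenspaces $L$ and $\overline{L}$. Since $\mathcal{J}$ is a real endomorphism, complex conjugation exchanges these two eigenspaces, so $\dim_{\mathbb{C}} L = \dim_{\mathbb{C}}\overline{L}$, and hence each equals half the complex rank of $(TM\oplus T^*M)\otimes\mathbb{C}$. If $\dim_{\mathbb{R}} M = 2n$, then $\dim_{\mathbb{C}} L = 2n$. Finally I would recall that $\langle-,-\rangle$ has split signature $(2n,2n)$, so its maximal isotropic subspaces have complex dimension $2n$, matching $\dim_{\mathbb{C}} L$.

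The only mildly subtle point is making sure that $L \cap \overline{L} = 0$, which follows because a nonzero vector in the intersection would simultaneously satisfy $\mathcal{J}v = \sqrt{-1}\,v$ and $\mathcal{J}v = -\sqrt{-1}\,v$. Apart from this, everything is standard linear algebra on a vector space with a nondegenerate symmetric bilinear form of split signature, and I do not expect any real obstacle.
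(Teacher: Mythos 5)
Your argument is correct and is the standard one: isotropy follows in one line from $\mathcal{J}+\mathcal{J}^*=0$ together with the eigenvalue equation, and maximality from the eigenspace decomposition $L\oplus\overline{L}$ of the complexified bundle plus the fact that a nondegenerate symmetric form on a space of rank $4n$ has maximal isotropics of dimension $2n$. The paper itself gives no proof, simply citing Gualtieri's thesis, and your reasoning reproduces the proof found there, so there is nothing to compare beyond noting that it is the expected argument.
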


\begin{prop}\cite{Gualt-d}\label{L-invo}
Let $L$ be a maximal isotropic subbundle of $TM\oplus T^{*}M$.
Then the following three conditions are equivalent:
\begin{itemize}
	\item $L$ is Courant involutive,
	\item $\Nij|_{L}=0$,
	\item $\Jac|_{L}=0$,
\end{itemize}
where $\Nij$ and $\Jac$ are given by
\begin{align*}
&\Nij(A,B,C)=\frac{1}{3}\left(\langle\llbracket A,B\rrbracket,C\rangle+\langle\llbracket B,C\rrbracket ,A\rangle+\langle\llbracket C,A\rrbracket,B\rangle\right),\\
&\Jac(A,B,C)=\llbracket \llbracket A,B\rrbracket,C\rrbracket+\llbracket \llbracket B,C\rrbracket ,A\rrbracket +\llbracket \llbracket C,A\rrbracket ,B\rrbracket,
\end{align*}
for any $A,B,C \in \Gamma(TM\oplus T^{*}M)$ .
\end{prop}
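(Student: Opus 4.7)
The plan is to reduce all three conditions to the vanishing of the trilinear expression
\[
T(A,B,C) := \langle \llbracket A,B\rrbracket,\,C\rangle
\]
restricted to sections of $L$, using maximal isotropy $L = L^\perp$ as the bridge between ``living in $L$'' and ``pairing trivially with $L$''.

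First I would check that $T|_{\Gamma(L)}$ is a tensor, i.e.\ $C^\infty(M)$-multilinear. Linearity in $C$ is built into the pairing. Linearity in $B$ follows from the Leibniz rule
\[
\llbracket A, fB\rrbracket = f\llbracket A,B\rrbracket + (\pi(A)f)\,B - \langle A,B\rangle\,df,
\]
combined with isotropy: the $\langle A,B\rangle\,df$ term vanishes, and pairing $(\pi(A)f)B$ against $C\in\Gamma(L)$ kills it via $\langle B,C\rangle = 0$. Linearity in $A$ then follows from skew-symmetry of $\llbracket\,,\,\rrbracket$. Next, total antisymmetry of $T$ reduces to antisymmetry in $(B,C)$, which comes from the compatibility identity
\[
\pi(A)\langle B,C\rangle
= \langle \llbracket A,B\rrbracket, C\rangle + \langle B, \llbracket A,C\rrbracket\rangle
+ \tfrac12 \pi(B)\langle A,C\rangle + \tfrac12\pi(C)\langle A,B\rangle,
\]
which collapses to $\langle \llbracket A,B\rrbracket,C\rangle + \langle B,\llbracket A,C\rrbracket\rangle = 0$ on $\Gamma(L)$. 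Thus $T \in \Gamma(\Lambda^3 L^*)$, and since cyclic permutation is even, the averaging in the definition of $\Nij$ simply reproduces $T$. Consequently $\Nij|_L = 0 \Leftrightarrow T \equiv 0$, and by $L = L^\perp$ this is precisely $\llbracket\Gamma(L),\Gamma(L)\rrbracket \subset \Gamma(L)$, i.e.\ Courant involutivity.

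For the Jacobiator, I would invoke the identity, valid for arbitrary sections of $TM\oplus T^*M$,
\[
\Jac(A,B,C) = d\,\Nij(A,B,C),
\]
in which the right-hand side is the exterior derivative of the scalar function $\Nij(A,B,C)$, viewed as a $1$-form in $T^*M\subset TM\oplus T^*M$. If $L$ is Courant involutive, then $\Nij|_L = 0$ by the previous paragraph, and hence $\Jac|_L = 0$. Conversely, assuming $\Jac|_L = 0$, take $A,B,C\in\Gamma(L)$ and an arbitrary $h\in C^\infty(M)$; tensoriality of $\Nij$ together with the identity applied to $(hA,B,C)$ gives
\[
0 = d\bigl(h\,\Nij(A,B,C)\bigr) - h\,d\,\Nij(A,B,C) = dh\cdot \Nij(A,B,C),
\]
and since $h$ is arbitrary this forces $\Nij(A,B,C) = 0$ pointwise.

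The main obstacle is the identity $\Jac = d\,\Nij$ itself: unlike the Dorfman bracket, the skew-symmetric Courant bracket fails the Jacobi identity precisely by this amount, and verifying it directly requires collecting a triple-cyclic sum of Lie-derivative and $d\iota$-terms, using the $-\tfrac12 d(\iota_X\beta - \iota_Y\alpha)$ correction built into $\llbracket\,,\,\rrbracket$. Everything else is essentially bookkeeping once $T$ has been identified with an element of $\Lambda^3 L^*$.
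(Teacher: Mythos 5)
Your argument is correct, and it is essentially the standard proof from the cited source \cite{Gualt-d} (the paper itself states this proposition without proof, deferring to Gualtieri): isotropy makes $T(A,B,C)=\langle\llbracket A,B\rrbracket,C\rangle$ tensorial and totally skew on $L$, maximal isotropy $L=L^{\perp}$ identifies its vanishing with involutivity, and the identity $\Jac=d\,\Nij$ together with the $hA$ rescaling trick handles the Jacobiator. The only step you do not carry out is the verification of $\Jac=d\,\Nij$ itself, which you correctly identify as the main computation; since that is a known lemma in the same reference and your description of how it is checked is accurate, this is acceptable.
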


Let $B$ be a smooth $2$-form.
Then the invertible bundle map given by exponentiating $B$,
\begin{equation*}
e^{B}=
\begin{pmatrix}
1&0\\B&1
\end{pmatrix}
\colon
X+\alpha \mapsto
X+\alpha+\iota_{X}B
\end{equation*}
is orthogonal.

\begin{lem}\cite{Gualt-d}\label{int-close}
A map $e^{B}$ is an automorphism of the Courant bracket if and only if $B$ is closed, i.e. $dB=0$.
\end{lem}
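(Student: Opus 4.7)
The plan is to compute both sides directly using the definition of the Courant bracket and Cartan calculus, and to read off the obstruction to being a bracket automorphism as an explicit $3$-form, namely $\iota_X\iota_Y\,dB$.

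First I would expand
\begin{equation*}
\llbracket e^B(X+\alpha),\ e^B(Y+\beta)\rrbracket
=\llbracket X+\alpha+\iota_X B,\ Y+\beta+\iota_Y B\rrbracket,
\end{equation*}
noting that the terms not involving $B$ reproduce exactly $\llbracket X+\alpha,Y+\beta\rrbracket$, while the image of the ordinary bracket under $e^B$ contributes an extra $\iota_{[X,Y]}B$. Subtracting, everything collapses to the $B$-dependent part
\begin{equation*}
\mathcal{L}_X(\iota_Y B)-\mathcal{L}_Y(\iota_X B)-\tfrac12 d\bigl(\iota_X\iota_Y B-\iota_Y\iota_X B\bigr)-\iota_{[X,Y]}B,
\end{equation*}
which, using $\iota_Y\iota_X=-\iota_X\iota_Y$, simplifies to $\mathcal{L}_X\iota_Y B-\mathcal{L}_Y\iota_X B-d\iota_X\iota_Y B-\iota_{[X,Y]}B$.

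Next I would apply Cartan's magic formula $\mathcal{L}_Z=d\iota_Z+\iota_Z d$ to each Lie derivative term and substitute the commutator identity
\begin{equation*}
\iota_{[X,Y]}=\mathcal{L}_X\iota_Y-\iota_Y\mathcal{L}_X=d\iota_X\iota_Y+\iota_X d\iota_Y-\iota_Y d\iota_X+\iota_X\iota_Y d.
\end{equation*}
All the terms involving $d\iota_X\iota_Y B$, $\iota_X d\iota_Y B$, and $\iota_Y d\iota_X B$ cancel, leaving exactly
\begin{equation*}
\llbracket e^B(X+\alpha),\ e^B(Y+\beta)\rrbracket -e^B\llbracket X+\alpha,Y+\beta\rrbracket=-\iota_X\iota_Y\, dB.
\end{equation*}

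Finally, $\iota_X\iota_Y\,dB$ vanishes for every pair of vector fields $X,Y$ if and only if $dB=0$, which gives both implications of the lemma. The only mildly tricky part is the bookkeeping of Cartan identities in the second step; the rest is just expanding the definitions, so I do not expect any serious obstacle beyond being careful with signs and with the factor $\tfrac12$ in the Courant bracket.
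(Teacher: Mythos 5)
Your computation is correct, and since the paper only cites this lemma from Gualtieri without reproducing a proof, your direct expansion via the Cartan formula and the identity $\iota_{[X,Y]}=\mathcal{L}_X\iota_Y-\iota_Y\mathcal{L}_X$ is exactly the standard argument from the cited source, yielding the obstruction $\llbracket e^B(X+\alpha),e^B(Y+\beta)\rrbracket-e^B\llbracket X+\alpha,Y+\beta\rrbracket=-\iota_X\iota_Y\,dB$. The final equivalence is also fine, since a $3$-form vanishes precisely when all its double contractions do.
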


\begin{defn}\cite{Gualt-d}
A generalized K\"ahler structure is a pair $(\mathcal{J}_{1},\mathcal{J}_{2})$ of commuting generalized complex structures such that $G=-\mathcal{J}_{1}\mathcal{J}_{2}$ gives a positive definite metric on $TM\oplus T^{*}M$.
\end{defn}

\begin{lem}\cite{Gualt-d}
A generalized K\"ahler metric is uniquely determined by a Riemannian metric $g$ together with a $2$-form $b$ as follows,
\begin{equation*}
G(g,b)=
\begin{pmatrix}
-g^{-1}b&g^{-1}\\
g-bg^{-1}b&bg^{-1}
\end{pmatrix}
=\begin{pmatrix}
1&0\\b&1
\end{pmatrix}
\begin{pmatrix}
0&g^{-1}\\g&0
\end{pmatrix}
\begin{pmatrix}
1&0\\-b&1
\end{pmatrix}.
\end{equation*}
\end{lem}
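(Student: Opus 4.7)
The plan is to exploit the eigenbundle decomposition of the involution $G$. I would first verify that $G = -\mathcal{J}_1\mathcal{J}_2$ is a self-adjoint involution on $TM\oplus T^*M$: the identities $G^* = G$ and $G^2 = \operatorname{id}$ follow immediately from $\mathcal{J}_i^* = -\mathcal{J}_i$, $\mathcal{J}_i^2 = -\operatorname{id}$, and the commutativity $[\mathcal{J}_1,\mathcal{J}_2] = 0$. Consequently $TM \oplus T^*M = V_+ \oplus V_-$ splits into the $\pm 1$-eigenbundles of $G$, and these are automatically $\langle-,-\rangle$-orthogonal since $G$ is self-adjoint with distinct eigenvalues. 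The positive-definiteness of $\langle G\cdot,\cdot\rangle$ then translates into the statement that $\langle-,-\rangle$ is positive definite on $V_+$ and negative definite on $V_-$.

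Next, since $T^*M$ is totally null for $\langle-,-\rangle$, neither $V_+$ nor $V_-$ can intersect it nontrivially, so each projects isomorphically onto $TM$ and is hence the graph of a bundle map $A_\pm\colon TM \to T^*M$. Identifying $A_\pm$ with bilinear forms on $TM$, the induced pairing on $\operatorname{graph}(A_\pm)$ equals the symmetric part of $A_\pm$, while $\langle-,-\rangle$-orthogonality of $V_+$ and $V_-$ amounts to the relation $A_+(X)(Y) + A_-(Y)(X) = 0$ for all $X,Y$. Decomposing $A_\pm$ into symmetric and antisymmetric parts, this forces their antisymmetric parts to agree (call the common value $b$) and their symmetric parts to be negatives of each other, with the symmetric part of $A_+$ being a positive definite Riemannian metric $g$. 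Thus $A_\pm = b \pm g$, and the pair $(g,b)$ is uniquely determined by $G$.

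Finally I reconstruct $G$ from $(g,b)$: every $X + \alpha \in TM\oplus T^*M$ decomposes uniquely as $v_+ + v_-$ with $v_\pm$ on the graph of $b\pm g$, and solving the resulting linear system yields $G(X+\alpha) = v_+ - v_-$ in exactly the block matrix form asserted. The factorization into the three stated matrices is then a direct multiplication, and its significance is that it exhibits $G(g,b)$ as the $B$-field transform of $G(g,0)$ by $b$. The only place real care is needed is the translation between bilinear forms on $TM$ and homomorphisms $TM \to T^*M$ in the middle step; once that bookkeeping is fixed, every remaining step reduces to routine linear algebra.
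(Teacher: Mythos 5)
Your argument is correct and is essentially the standard proof from Gualtieri's thesis, which is where the paper takes this lemma from without reproving it: $G$ is a self-adjoint involution, its $\pm1$-eigenbundles $C_\pm$ are definite and hence (by the split signature of $\langle-,-\rangle$) of rank $n$ and transverse to $T^*M$, so they are the graphs of $b\pm g$, and solving the resulting linear system recovers the block matrix and its factorization as the $B$-field transform of $G(g,0)$. The only step worth making explicit is the rank count $\operatorname{rk} C_\pm = n$ (a definite subbundle of a signature $(n,n)$ bundle has rank at most $n$, and the two ranks sum to $2n$), which is what upgrades ``injects into $TM$'' to ``projects isomorphically onto $TM$''; everything else checks out.
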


Let $C_{+}$ be a positive definite subbundle of $TM\oplus T^{*}M$ and $C_{-}$ a negative definite subbundle with respect to the inner product which are given by
\begin{equation*}
C_{\pm}=\{X\pm g(X,\cdot)+b(X,\cdot)\;;\;X \in TM\}.
\end{equation*}
The projection from $C_{\pm}$ to $TM$, $\mathcal{J}_{1}$ induces two almost complex structures $J_{\pm}$ on $TM$.
If both $(g,J_{+})$ and $(g,J_{-})$ are Hermitian structures, $(g,J_{\pm})$ is called a bi-Hermitian structure.

\begin{thm}\cite{Gualt-d}\label{reconst}
A generalized K\"ahler structure $(\mathcal{J}_{1},\mathcal{J}_{2})$ is equivalent to bi-Hermitian structure $(g,b,J_{\pm})$ which satisfies the following condition.
\begin{itemize}
\item For all vector fields $X,Y,Z$,
\begin{equation*}
db(X,Y,Z)=d\omega_{+}(J_{+}X,J_{+}Y,J_{+}Z)=-d\omega_{-}(J_{-}X,J_{-}Y,J_{-}Z),
\end{equation*} 
\end{itemize}
where $\omega_{\pm}(X,Y)=g(X,J_{\pm}Y)$.
\end{thm}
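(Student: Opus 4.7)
The plan is to establish both directions of the equivalence, with most of the work done in the forward direction. Starting from a generalized K\"ahler structure $(\mathcal{J}_1,\mathcal{J}_2)$, the preceding lemma extracts $(g,b)$ from $G=-\mathcal{J}_1\mathcal{J}_2$ and identifies the $\pm 1$-eigenbundles with $C_\pm$ in the form stated in the excerpt. Since $\mathcal{J}_1$ and $\mathcal{J}_2$ commute, each commutes with $G$, so both preserve $C_\pm$. The projection $\pi_{TM}\colon C_\pm\to TM$ is a bundle isomorphism, and under it the inner product pulls back to $\pm g$, so transporting $\mathcal{J}_1|_{C_\pm}$ to $TM$ yields almost complex structures $J_\pm$, while $\mathcal{J}_1^{*}=-\mathcal{J}_1$ descends to $g$-orthogonality of $J_\pm$. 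Thus $(g,J_\pm)$ are Hermitian.

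Next I would convert the Courant integrability of the $+\sqrt{-1}$-eigenspaces $L_1,L_2$ of $\mathcal{J}_1,\mathcal{J}_2$ into data on $(g,b,J_\pm)$. Using Proposition \ref{L-invo}, integrability is checked by evaluating $\Nij$ on sections of the form $X\pm g(X,\cdot)+b(X,\cdot)$, which span $C_\pm$. It is technically cleaner first to apply $e^{-b}$ to move to the untwisted model $C_\pm=\{X\pm g(X,\cdot)\}$; since by Lemma \ref{int-close} the map $e^{-b}$ is an automorphism of the Courant bracket only when $db=0$, the non-closedness of $b$ produces exactly a $db$ correction term in the Nijenhuis computation. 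Expanding $\llbracket X\pm g(X,\cdot),\,Y\pm g(Y,\cdot)\rrbracket$ and pairing with a third such section, the symmetric components yield $d\omega_\pm$ on the $C_\pm$ side, the $b$-twist gives a $db$ contribution, and vanishing of the antisymmetric part forces $N_{J_\pm}=0$, i.e.\ classical integrability of $J_\pm$. Arranging these terms produces the single identity
\begin{equation*}
db(X,Y,Z)=d\omega_+(J_+X,J_+Y,J_+Z)=-d\omega_-(J_-X,J_-Y,J_-Z),
\end{equation*}
where the opposite sign in front of $d\omega_-$ is inherited from the negative-definite restriction $G|_{C_-}=-g$.

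For the converse, given $(g,b,J_\pm)$ satisfying the differential identity, I define $C_\pm$ by the displayed formula, transport $J_\pm$ to complex structures on $C_\pm$ via $\pi_{TM}$, and set $\mathcal{J}_1$ to agree with these on $C_\pm$; the second structure is defined by requiring $\mathcal{J}_2$ to equal $J_+$ on $C_+$ and $-J_-$ on $C_-$, equivalently $\mathcal{J}_2=-G\mathcal{J}_1$. Commutativity $[\mathcal{J}_1,\mathcal{J}_2]=0$ and positive-definiteness of $-\mathcal{J}_1\mathcal{J}_2=G$ are immediate from this block description, and reversing the Nijenhuis computation of the previous paragraph shows that the differential identity is precisely what makes $L_1$ and $L_2$ Courant involutive.

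The main obstacle is the middle step: carrying out the Nijenhuis calculation on sections $X\pm g(X,\cdot)+b(X,\cdot)$, carefully tracking how the Lie derivative and interior product terms in the Courant bracket produce $d\omega_\pm$, $N_{J_\pm}$, and $db$. All of the subtlety of the theorem, in particular the sign asymmetry between $d\omega_+$ and $d\omega_-$ and the placement of $J_\pm$ inside the arguments, is generated by the bookkeeping of this single computation.
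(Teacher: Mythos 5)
The paper gives no proof of this statement --- it is quoted directly from Gualtieri's thesis \cite{Gualt-d} --- and your outline is precisely the argument found there: read off $(g,b)$ from $G=-\mathcal{J}_{1}\mathcal{J}_{2}$, use commutativity to restrict $\mathcal{J}_{1}$ to the eigenbundles $C_{\pm}$ and transport it through $\pi_{TM}$ to obtain the Hermitian pair $(g,J_{\pm})$, then translate Courant involutivity of the eigenspaces, after untwisting by $e^{-b}$, into integrability of $J_{\pm}$ together with the $db$ identity, the sign asymmetry coming from $G|_{C_{-}}=-g$. One small slip: since $\mathcal{J}_{1}^{2}=-id$ and the two structures commute, $G=-\mathcal{J}_{1}\mathcal{J}_{2}$ gives $\mathcal{J}_{2}=G\mathcal{J}_{1}$, not $-G\mathcal{J}_{1}$ (your own block description, $J_{+}$ on $C_{+}$ and $-J_{-}$ on $C_{-}$, is the correct one, and $-G\mathcal{J}_{1}$ would make $-\mathcal{J}_{1}\mathcal{J}_{2}$ negative definite). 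The one piece you defer, the explicit bracket computation producing $d\omega_{\pm}$, $N_{J_{\pm}}$ and $db$, is indeed where all the work lies, but you have located each term correctly, so the plan is sound.
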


\section{generalized almost contact structures}\label{sgcs}

An almost contact structure on an odd dimensional manifold $M$ is a triple $(\varphi,\xi,\eta)$, where $\varphi$ is an endomorphism of $TM$, $\xi$ is a vector field, and $\eta$ is a $1$-form which satisfies
\begin{equation*}
\eta(\xi)=1,\quad \varphi\circ\varphi=-id+\eta\otimes\xi.
\end{equation*}
We replace $\varphi$ by an endomorphism $\Phi$ of $TM\oplus T^{*}M$ and $\xi,\eta$ by sections $E_{\pm}$ of $TM\oplus T^{*}M$ respectively.
We define a generalized almost contact structure:

\begin{defn}\label{gac}
A generalized almost contact structure on a smooth manifold $M$ is a triple $(\Phi,E_{+},E_{-})$, where $\Phi$ is an endomorphism of $TM\oplus T^{*}M$ and $E_{\pm}$ are sections of $TM\oplus T^{*}M$ which satisfy
\begin{gather*}
\Phi+\Phi^{*}=0,\\
2\langle E_{+},E_{-}\rangle=1,\;\langle E_{\pm},E_{\pm} \rangle=0,\\
\Phi\circ\Phi=-id+E_{+}\otimes E_{-}+E_{-}\otimes E_{+}.
\end{gather*}
\end{defn}

Let $E_{\pm}=\xi_{\pm}+\eta_{\pm}$ where $\xi_{\pm}$ are vector fields and $\eta_{\pm}$ are $1$-forms.
Then we have
\begin{gather*}
\Phi\circ\Phi=-id+
\begin{pmatrix}
\eta_{+}\otimes\xi_{-}+\eta_{-}\otimes\xi_{+}&\xi_{+}\otimes \xi_{-}+\xi_{-}\otimes \xi_{+}\\
\eta_{+}\otimes \eta_{-}+\eta_{-}\otimes\eta_{+}&\xi_{+}\otimes\eta_{-}+\xi_{-}\otimes\eta_{+}
\end{pmatrix}.
\end{gather*}

\begin{rem}
Vaisman, Poon and Wade discussed the restrictive case of $\xi_{-}=\eta_{+}=0$ \cite{PoWa,Vais,Vais3}.
However, their definition is not compatible with the $B$-field transformations.
Note that a generalized almost contact structure of Definition \ref{gac} satisfies the condition of generalized $F$-structure \cite{Vais3}.
\end{rem}

\begin{exam}\cite{PoWa}\label{ex: al-con-st}
Let $(\varphi,\xi,\eta)$ be an almost contact structure.
Then we have a generalized almost contact structure by setting
\begin{equation*}
\Phi=
\begin{pmatrix}
\varphi&0\\0&-\varphi^{*}
\end{pmatrix},
\;E_{+}=\xi,\;E_{-}=\eta,
\end{equation*}
where $(\varphi^{*}\alpha)(X)=\alpha(\varphi X),\;X \in TM,\;\alpha\in T^{*}M$.
\end{exam}
\begin{exam}\cite{PoWa}
A $(2n+1)$-dimensional manifold $M$ is a contact manifold if there exists a $1$-form $\eta$ such that
\begin{equation*}
\eta\wedge (d\eta)^{n}\neq 0
\end{equation*}
everywhere on $M$.
A $1$-form $\eta$ is called a contact $1$-form.
Then there is a unique vector field $\xi$ satisfying the two conditions
\begin{equation*}
\iota_{\xi}d\eta=0,\quad,\eta(\xi)=1.
\end{equation*}
This vector field is called the Reeb field of the contact form $\eta$.
Since $\eta$ is a contact $1$-form, the map
\begin{equation*}
\rho(X):=\iota_{X}d\eta-\eta(X)\eta
\end{equation*}
is an isomorphism from the tangent bundle to the cotangent bundle.
We define a bivector field $\pi$ by
\begin{equation*}
\pi(\alpha,\beta):=d\eta(\rho^{-1}(\alpha),\rho^{-1}(\beta)).
\end{equation*}
Then we have a generalized almost contact structure by setting
\begin{equation*}
\Phi=
\begin{pmatrix}
0&\pi\\ d\eta&0
\end{pmatrix},
\;E_{+}=\eta,\;E_{-}=\xi.
\end{equation*}
\end{exam}

\begin{lem}
Let $(\Phi,E_{\pm})$ be a generalized almost contact structure.
Then we have the following identities,
\begin{equation*}
\Phi(E_{\pm})=0
\end{equation*}
\end{lem}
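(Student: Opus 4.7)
The plan is to first show that $\Phi^{2}E_{\pm}=0$ by plugging $E_{\pm}$ directly into the defining quadratic relation, and then to bootstrap from this, using the skew-symmetry $\Phi+\Phi^{*}=0$, to conclude that $\Phi E_{\pm}$ itself vanishes.

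For the first step I would apply the identity $\Phi\circ\Phi=-\text{id}+E_{+}\otimes E_{-}+E_{-}\otimes E_{+}$ to $E_{+}$. Under the tensor-product convention $(X\otimes Y)(A)=2\langle X,A\rangle Y$ (the one implicit in the matrix expansion displayed just after Definition \ref{gac}), the normalizations $\langle E_{+},E_{+}\rangle=0$ and $2\langle E_{+},E_{-}\rangle=1$ give
$$
\Phi^{2}E_{+}=-E_{+}+2\langle E_{+},E_{+}\rangle E_{-}+2\langle E_{-},E_{+}\rangle E_{+}=-E_{+}+0+E_{+}=0,
$$
and the symmetric computation yields $\Phi^{2}E_{-}=0$.

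For the second step, set $V:=\Phi E_{+}$. Skew-symmetry gives $\langle V,E_{+}\rangle=\langle\Phi E_{+},E_{+}\rangle=-\langle E_{+},\Phi E_{+}\rangle=-\langle V,E_{+}\rangle$, hence $\langle E_{+},V\rangle=0$. Feeding $V$ into the defining relation and using $\Phi^{2}V=\Phi(\Phi^{2}E_{+})=0$ from the first step,
$$
0=\Phi^{2}V=-V+2\langle E_{+},V\rangle E_{-}+2\langle E_{-},V\rangle E_{+}=-V+2\langle E_{-},V\rangle E_{+},
$$
so $V=cE_{+}$ with $c:=2\langle E_{-},V\rangle$. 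Applying $\Phi$ once more gives $0=\Phi V=c\Phi E_{+}=cV=c^{2}E_{+}$, and since $\langle E_{+},E_{-}\rangle=\tfrac{1}{2}\neq 0$ forces $E_{+}$ to be nonzero pointwise, we conclude $c=0$ and therefore $\Phi E_{+}=0$. The identity $\Phi E_{-}=0$ follows by the identical argument with $E_{+}$ and $E_{-}$ swapped.

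The only real obstacle is bookkeeping the tensor-product convention, since $E_{+}\otimes E_{-}$ acts through the natural pairing $\langle\cdot,\cdot\rangle$ on $TM\oplus T^{*}M$ (with the resulting factor of $2$) rather than through ordinary duality; once this is settled the whole argument is linear algebra in each fiber, and no integrability or analytic hypotheses enter.
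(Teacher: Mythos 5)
Your argument is correct and is essentially the paper's own proof: both first observe $\Phi^{2}E_{\pm}=0$ from the quadratic identity together with $\langle E_{\pm},E_{\pm}\rangle=0$ and $2\langle E_{+},E_{-}\rangle=1$, use skew-symmetry to get $\langle\Phi E_{+},E_{+}\rangle=0$, deduce $\Phi E_{+}=2\langle E_{-},\Phi E_{+}\rangle E_{+}$, and then apply $\Phi$ once more to force the coefficient to vanish. Your reading of the tensor-product convention $(X\otimes Y)(A)=2\langle X,A\rangle Y$ matches the matrix expansion given after Definition \ref{gac}, so no adjustment is needed.
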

\begin{proof}
Since $\Phi+\Phi^{*}=0$, we have
\begin{equation*}
\langle \Phi E_{+},E_{+}\rangle=\langle E_{+},-\Phi E_{+}\rangle =-\langle \Phi E_{+},E_{+}\rangle.
\end{equation*}
Thus it follows that
\begin{equation*}
\langle \Phi E_{+},E_{+}\rangle=0.
\end{equation*}
Then we obtain
\begin{equation}\label{l-1}
\begin{split}
0&=\Phi\circ(\Phi\circ\Phi)(E_{+})=(\Phi\circ\Phi)\circ\Phi(E_{+})\\
&=-\Phi E_{+}+2\langle E_{+},\Phi E_{+}\rangle E_{-}+2\langle E_{-},\Phi E_{+}\rangle E_{+}\\
&=-\Phi E_{+}+2\langle E_{-},\Phi E_{+}\rangle E_{+}.
\end{split}
\end{equation}
We also obtain
\begin{equation}\label{l-2}
0=\Phi\circ(\Phi\circ\Phi)\circ\Phi(E_{+})
=2\langle E_{-},\Phi E_{+}\rangle \Phi E_{+}.
\end{equation}
From (\ref{l-1}) and (\ref{l-2}), we have
\begin{equation*}
\Phi E_{+}=0.
\end{equation*}
Similarly, we have
\begin{equation*}
\Phi E_{-}=0.
\end{equation*}
\end{proof}

By a simple calculation, we get

\begin{lem}
Let $(\Phi,E_{\pm})$ be a generalized almost contact structure and $B$ a smooth $2$-form.
Then $(e^{B}\Phi e^{-B},e^{B}E_{\pm})$ is a generalized almost contact structure.
\end{lem}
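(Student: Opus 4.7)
The plan is to exploit the fact that the $B$-field transformation $e^{B}$ is orthogonal with respect to $\langle-,-\rangle$, so that $(e^{B})^{*}=e^{-B}$. Each of the three defining conditions of Definition \ref{gac} then transforms transparently under conjugation.

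First, the pairing conditions fall out immediately from orthogonality of $e^{B}$: one has $\langle e^{B}E_{\pm},e^{B}E_{\pm}\rangle=\langle E_{\pm},E_{\pm}\rangle=0$ and $2\langle e^{B}E_{+},e^{B}E_{-}\rangle=2\langle E_{+},E_{-}\rangle=1$. For the skew-adjointness condition, I would compute
\begin{equation*}
(e^{B}\Phi e^{-B})^{*}=(e^{-B})^{*}\Phi^{*}(e^{B})^{*}=e^{B}(-\Phi)e^{-B}=-e^{B}\Phi e^{-B},
\end{equation*}
using $\Phi^{*}=-\Phi$.

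The only step requiring a moment of thought is the quadratic identity. Squaring gives $(e^{B}\Phi e^{-B})^{2}=e^{B}\Phi^{2}e^{-B}=-\mathrm{id}+e^{B}(E_{+}\otimes E_{-}+E_{-}\otimes E_{+})e^{-B}$, so the remaining task is to verify
\begin{equation*}
e^{B}(E\otimes F)e^{-B}=(e^{B}E)\otimes(e^{B}F)
\end{equation*}
for each summand. Reading off from the matrix expression displayed just after Definition \ref{gac}, the endomorphism $E\otimes F$ is characterised by $(E\otimes F)(A)=2\langle F,A\rangle E$. Given this interpretation, the conjugation formula reduces to the identity $\langle F,e^{-B}A\rangle=\langle e^{B}F,A\rangle$, which is precisely orthogonality of $e^{B}$. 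Applying this with $(E,F)=(E_{+},E_{-})$ and $(E_{-},E_{+})$ and assembling the three items completes the verification.

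The only potential pitfall is pinning down the normalisation convention for the tensor $E\otimes F$ as an endomorphism of $TM\oplus T^{*}M$; once that interpretation is fixed, every step is a one-line consequence of the orthogonality of $e^{B}$, which is why the author can dismiss the proof as a simple calculation.
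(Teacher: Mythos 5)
Your proof is correct, and it is exactly the ``simple calculation'' the paper omits: orthogonality of $e^{B}$ gives the pairing conditions and $(e^{B})^{*}=e^{-B}$ handles skew-adjointness, while the identity $e^{B}(E\otimes F)e^{-B}=(e^{B}E)\otimes(e^{B}F)$ (with the convention $(E\otimes F)(A)=2\langle F,A\rangle E$, which you correctly read off from the paper's computation of $(\Phi\circ\Phi)\circ\Phi(E_{+})$) settles the quadratic identity. Nothing is missing.
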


By Definition \ref{gac}, we have
\begin{equation*}
\Phi^{3}+\Phi=0.
\end{equation*}
Thus $\Phi$ has three eigenvalues, namely $0,+\sqrt{-1},-\sqrt{-1}$.
The kernel of $\Phi$ is given by
\begin{equation*}
L_{E_{+}}\oplus L_{E_{-}},
\end{equation*}
where $L_{E_{\pm}}$ are line bundles generated by $E_{\pm}=\xi_{\pm}+\eta_{\pm}$, respectively.
We define
\begin{align*}
E^{(1,0)}&=\{X+\alpha-\sqrt{-1}\Phi(X+\alpha)\;;\;X \in TM, \alpha \in T^{*}M, \langle X+\alpha,E_{\pm}\rangle=0\},\\
E^{(0,1)}&=\{X+\alpha+\sqrt{-1}\Phi(X+\alpha)\;;\;X \in TM, \alpha \in T^{*}M, \langle X+\alpha,E_{\pm}\rangle=0\}.
\end{align*}
Then $E^{(1,0)}$ is $+\sqrt{-1}$-eigenbundle and $E^{(0,1)}$ is $-\sqrt{-1}$-eigenbundle.
We consider the following four different complex vector bundles,
\begin{equation}
\begin{split}
L^{+}=L_{E_{+}}\oplus E^{(1,0)},&\quad \overline{L^{+}}=L_{E_{+}}\oplus E^{(0,1)},\\
L^{-}=L_{E_{-}}\oplus E^{(1,0)},&\quad \overline{L^{-}}=L_{E_{-}}\oplus E^{(0,1)}.
\end{split}
\end{equation}

The next lemma corresponds to Lemma 2.3 in \cite{PoWa}

\begin{lem}
Bundles $E^{(1,0)},E^{(0,1)},L^{\pm},\overline{L^{\pm}}$ are isotropic.
\end{lem}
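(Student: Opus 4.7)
The plan is to verify the two types of isotropy conditions that together establish the claim: pairings within $E^{(1,0)}$ (and $E^{(0,1)}$), and pairings of $E_{\pm}$ against elements of $E^{(1,0)}$ (and $E^{(0,1)}$). The three facts driving the argument are (i) $\Phi^*=-\Phi$, (ii) $\Phi(E_{\pm})=0$ from the previous lemma, and (iii) for any $A=X+\alpha$ with $\langle A,E_{\pm}\rangle=0$ the defining identity $\Phi\circ\Phi=-\mathrm{id}+E_{+}\otimes E_{-}+E_{-}\otimes E_{+}$ collapses to $\Phi^{2}A=-A$.

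First I would show that $E^{(1,0)}$ is isotropic. Take $A,B\in TM\oplus T^{*}M$ with $\langle A,E_{\pm}\rangle=\langle B,E_{\pm}\rangle=0$ and expand
\begin{equation*}
\langle A-\sqrt{-1}\,\Phi A,\; B-\sqrt{-1}\,\Phi B\rangle
=\langle A,B\rangle -\sqrt{-1}\bigl(\langle A,\Phi B\rangle+\langle \Phi A,B\rangle\bigr)-\langle \Phi A,\Phi B\rangle.
\end{equation*}
The middle pair cancels by skew-adjointness $\Phi^{*}=-\Phi$, and the last term equals $-\langle A,\Phi^{2}B\rangle=\langle A,B\rangle$ by (iii), so the whole expression vanishes. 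The same calculation with the sign of $\sqrt{-1}$ flipped shows $E^{(0,1)}$ is isotropic.

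Next I would enlarge to $L^{\pm}$ and $\overline{L^{\pm}}$. Since $\langle E_{\pm},E_{\pm}\rangle=0$ by Definition \ref{gac}, it remains only to pair $E_{+}$ (resp.\ $E_{-}$) with an element of $E^{(1,0)}$ or $E^{(0,1)}$. For $A$ orthogonal to $E_{\pm}$,
\begin{equation*}
\langle E_{+},\,A\mp\sqrt{-1}\,\Phi A\rangle
=\langle E_{+},A\rangle\pm\sqrt{-1}\langle \Phi E_{+},A\rangle=0,
\end{equation*}
where the first term is zero by assumption on $A$, the second by $\Phi(E_{+})=0$, and we used $\langle E_{+},\Phi A\rangle=-\langle\Phi E_{+},A\rangle$. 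The identical calculation with $E_{-}$ in place of $E_{+}$ completes the verification for $L^{-}$ and $\overline{L^{-}}$.

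There is no real obstacle here: the argument is entirely formal given the skew-adjointness of $\Phi$, the kernel computation $\Phi(E_{\pm})=0$, and the orthogonality condition built into the definition of $E^{(1,0)}$ and $E^{(0,1)}$. The only point worth care is bookkeeping the factor of $2$ in the convention $E_{+}\otimes E_{-}+E_{-}\otimes E_{+}$ so that $\Phi^{2}$ acts as $-\mathrm{id}$ precisely on the subspace orthogonal to $E_{\pm}$, which is exactly where elements defining $E^{(1,0)}$ and $E^{(0,1)}$ live.
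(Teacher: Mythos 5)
Your proof is correct and follows essentially the same route as the paper: isotropy of $E^{(1,0)}$ (and $E^{(0,1)}$) from the skew-adjointness $\Phi^{*}=-\Phi$ together with $\Phi^{2}=-\mathrm{id}$ on the subspace orthogonal to $E_{\pm}$, and then the pairings involving $E_{\pm}$ killed by $\langle E_{\pm},E_{\pm}\rangle=0$ and $\Phi(E_{\pm})=0$. The only cosmetic difference is that you expand the generators $A-\sqrt{-1}\,\Phi A$ directly where the paper computes $\langle\Phi A,\Phi B\rangle$ in two ways using the eigenvalue $\sqrt{-1}$, and you write out explicitly the cross-pairings with $E_{\pm}$ that the paper dismisses with ``similarly.''
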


\begin{proof}
Let $A,B$ are sections of $E^{(1,0)}$.
By our definition, we have $\langle A,E_{\pm}\rangle=0$.
It follows from $\Phi+\Phi^{*}=0$ that
\begin{align*}
&\langle \Phi A,\Phi B\rangle=\langle \sqrt{-1}A,\sqrt{-1}B\rangle=-\langle A,B\rangle,\\
&\langle \Phi A,\Phi B\rangle=\langle A,-\Phi^{2}B\rangle=\langle A,B\rangle.
\end{align*}
Therefore $E^{(1,0)}$ is isotropic.
Similarly, $E^{(0,1)},L^{\pm},\overline{L^{\pm}}$ are isotropic since $\langle E_{\pm},E_{\pm}\rangle=0$.
\end{proof}

According to \cite{PoWa}, we define
\begin{defn}
Let $(\Phi,E_{\pm})$ be a generalized almost contact structure.
If either of $L^{\pm}$ is Courant involutive, it is called a generalized contact structure.
If both $L^{\pm}$ are Courant involutive, it is called a strong generalized contact structure.
\end{defn}

An almost contact metric structure on $M$ is $(g,\varphi,\xi,\eta)$, where $(\varphi,\xi,\eta)$ is an almost contact structure and $g$ is a Riemannian metric which satisfies
\begin{equation*}
g(\varphi X,\varphi Y)=g(X,Y)-\eta(X)\eta(Y), \qquad \forall X,Y \in TM.
\end{equation*}
We define a generalized almost contact metric structure:

\begin{defn}\label{gacm}
Let $(\Phi,E_{\pm})$ be a generalized almost contact structure.
If $G\colon TM\oplus T^{*}M\rightarrow TM\oplus T^{*}M$ is a generalized Riemannian metric which satisfies
\begin{equation*}
-\Phi G\Phi=G-E_{+}\otimes E_{+}-E_{-}\otimes E_{-},
\end{equation*}
then $(G,\Phi,E_{\pm})$ is a generalized almost contact metric structure.
\end{defn}

This definition satisfies the condition of generalized metric $F$-structure without a signature \cite{Vais3}.

From Definition \ref{gacm} we have that $(G, G\Phi=\Phi G,GE_{\pm}=E_{\mp})$ is also a generalized almost contact metric structure.

\section{generalized Sasakian structure}\label{gss}

There is the intriguing correspondence between the geometry on the cone $C(M)=M\times\mathbb{R}_{>0}$ and the geometry on $M$ \cite{Boy-Gal}.
In fact, an almost contact structure $(\varphi,\xi,\eta)$ gives rises to an almost complex structure $I$ on $C(M)$;
\begin{equation*}
I=\varphi+\eta\otimes\frac{\partial}{\partial t}-dt\otimes\xi,
\end{equation*}
where $e^{t}=r$ denotes the coordinate on $\mathbb{R}_{>0}$.
If $I$ is integrable, an almost contact structure is called a normal almost contact structure.
Let $(\Phi,E_{\pm}=\xi_{\pm}+\eta_{\pm})$ be a generalized almost contact structure on $M$.
we recall a bundle map $\Psi\colon TC(M)\oplus T^{*}C(M)\rightarrow TC(M)\oplus T^{*}C(M)$ by
\begin{align*}
\Psi(E_{+},E_{-})
&=E_{-}\otimes\frac{\partial}{\partial t}-\frac{\partial}{\partial t}\otimes E_{-}+E_{+}\otimes dt -dt\otimes E_{+}\\
&=
\begin{pmatrix}
\eta_{-}\otimes\frac{\partial}{\partial t}-dt\otimes\xi_{+}&\xi_{-}\otimes\frac{\partial}{\partial t}-\frac{\partial}{\partial t}\otimes\xi_{-}\\
\eta_{+}\otimes dt-dt\otimes\eta_{+}&\xi_{+}\otimes dt-\frac{\partial}{\partial t}\otimes\eta_{-}
\end{pmatrix}.
\end{align*}
Then it follows that
\begin{equation*}
\Phi+\Psi(E_{+},E_{-})
\end{equation*}
are generalized almost complex structures on $C(M)$.

\begin{prop}\label{L_t-ver}
There is a one-to-one correspondence between generalized almost contact structures $(\Phi,E_{\pm})$ on $M$ and generalized almost complex structures $\mathcal{J}$ on $C(M)$ such that
\begin{align*}
&\mathcal{L}_{\frac{\partial}{\partial t}}\mathcal{J}=0,\\
&\mathcal{J}\frac{\partial}{\partial t} \in TM\oplus T^{*}M,\qquad
\mathcal{J}dt \in TM\oplus T^{*}M.
\end{align*}
\end{prop}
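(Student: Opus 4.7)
The plan is to write the correspondence explicitly in both directions. In the forward direction, given $(\Phi,E_{+},E_{-})$ on $M$, I would extend $\Phi$ to all of $TC(M)\oplus T^{*}C(M)$ by declaring $\Phi(\partial/\partial t)=\Phi(dt)=0$, and set $\mathcal{J}=\Phi+\Psi(E_{+},E_{-})$. The properties listed in the proposition are essentially tautological for this $\mathcal{J}$: the formula for $\Psi$ shows at once that $\mathcal{J}(\partial/\partial t)=E_{+}$ and $\mathcal{J}(dt)=E_{-}$ lie in $TM\oplus T^{*}M$, and because $\Phi$, $E_{\pm}$, $\partial/\partial t$, $dt$ are all pulled back from $M$ or canonical on the $\mathbb{R}_{>0}$-factor, $\mathcal{J}$ is $t$-invariant, i.e.\ $\mathcal{L}_{\partial/\partial t}\mathcal{J}=0$.

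The substance is checking that $\mathcal{J}$ is a generalized almost complex structure. Skew-adjointness $\mathcal{J}+\mathcal{J}^{*}=0$ reduces to $\Phi+\Phi^{*}=0$ together with the manifest skew form of $\Psi(E_{+},E_{-})=E_{-}\otimes\partial_{t}-\partial_{t}\otimes E_{-}+E_{+}\otimes dt-dt\otimes E_{+}$. For $\mathcal{J}^{2}=-\mathrm{id}$ I would split $TC(M)\oplus T^{*}C(M)$ as $(TM\oplus T^{*}M)\oplus\langle\partial/\partial t,dt\rangle$ and check the four resulting blocks. On $\langle\partial/\partial t,dt\rangle$ the calculation uses only $\langle E_{+},E_{-}\rangle=\tfrac{1}{2}$ and $\langle E_{\pm},E_{\pm}\rangle=0$; on $TM\oplus T^{*}M$ it uses the axiom $\Phi^{2}=-\mathrm{id}+E_{+}\otimes E_{-}+E_{-}\otimes E_{+}$; and the off-diagonal blocks collapse via the identity $\Phi E_{\pm}=0$ established in the lemma preceding this proposition.

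For the inverse map, given such a $\mathcal{J}$ on $C(M)$, I would set $E_{+}:=\mathcal{J}(\partial/\partial t)$ and $E_{-}:=\mathcal{J}(dt)$, which lie in $TM\oplus T^{*}M$ by hypothesis, and define $\Phi$ as follows: by skew-adjointness, for $A\in TM\oplus T^{*}M$ the components of $\mathcal{J}A$ along $\partial/\partial t$ and $dt$ are forced to be $-2\langle A,E_{-}\rangle\partial/\partial t-2\langle A,E_{+}\rangle dt$, so $\mathcal{J}A-\Psi(E_{+},E_{-})A$ automatically lies in $TM\oplus T^{*}M$ and defines an endomorphism $\Phi$ of $TM\oplus T^{*}M$. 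The axioms of a generalized almost contact structure are then read off from $\mathcal{J}^{*}\mathcal{J}=\mathrm{id}$ applied to $\partial/\partial t$, $dt$ (yielding the pairing conditions on $E_{\pm}$) and from $\mathcal{J}^{2}=-\mathrm{id}$ restricted to $TM\oplus T^{*}M$ (yielding the formula for $\Phi^{2}$). The $t$-invariance hypothesis $\mathcal{L}_{\partial/\partial t}\mathcal{J}=0$ guarantees that $\Phi$, $E_{\pm}$ are independent of $t$, hence descend to genuine tensors on $M$. The two constructions are manifestly inverse to each other.

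The only place where anything nontrivial happens is the verification of $\mathcal{J}^{2}=-\mathrm{id}$ in the forward direction: the cross terms between $\Phi$ and $\Psi(E_{+},E_{-})$ must cancel, and this cancellation is exactly where the identity $\Phi E_{\pm}=0$ is needed. Everything else is linear bookkeeping once the blocks are written out.
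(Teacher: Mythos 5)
Your proof is correct and takes essentially the same route as the paper's: both rest on the decomposition $TC(M)\oplus T^{*}C(M)=(TM\oplus T^{*}M)\oplus\langle\frac{\partial}{\partial t},dt\rangle$, use skew-adjointness and $t$-invariance to force the general form $\mathcal{J}=\mathcal{J}_{M}+\Psi(B,A)+h(\frac{\partial}{\partial t}\otimes dt-dt\otimes\frac{\partial}{\partial t})$ with $h$ killed by the hypothesis $\mathcal{J}dt\in TM\oplus T^{*}M$, and read the contact axioms off $\mathcal{J}^{2}=-\mathrm{id}$; your explicit verification of the forward direction (where $\Phi E_{\pm}=0$ cancels the cross terms $\Phi\Psi+\Psi\Phi$) merely fills in what the paper dismisses as ``clearly''. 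One harmless caveat: with the paper's tensor convention $(A\otimes B)(C)=2\langle A,C\rangle B$ one finds $\mathcal{J}\frac{\partial}{\partial t}=-E_{+}$ and $\mathcal{J}dt=-E_{-}$ rather than $+E_{\pm}$, but since $(\Phi,-E_{+},-E_{-})$ is again a generalized almost contact structure this sign does not affect the bijection.
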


\begin{proof}
Let $\mathcal{J}$ be a generalized almost complex structure which satisfies above conditions.
Since $\mathcal{J}=-\mathcal{J}^{*}$, if $\mathcal{L}_{\frac{\partial}{\partial t}}\mathcal{J}=0$ then we can write
\begin{equation*}
\mathcal{J}=\mathcal{J}_{M}+A\otimes\frac{\partial}{\partial t}-\frac{\partial}{\partial t}\otimes A+B\otimes dt -dt\otimes B+h\frac{\partial}{\partial t}\otimes dt-hdt\otimes\frac{\partial}{\partial t}
\end{equation*}
where $\mathcal{J}_{M}\colon TM\oplus T^{*}M\rightarrow TM\oplus T^{*}M$, $A,B \in TM\oplus T^{*}M$ and $h \in C^{\infty}(M)$.
From $\mathcal{J}dt \in TM\oplus T^{*}M$, we have $h=0$.
$\mathcal{J}^{2}=-id$ implies that $(\mathcal{J}_{M},B,A)$ is a generalized almost contact structure.

$\Phi+\Psi(E_{+},E_{-})$ is clearly a generalized almost complex structure which satisfies above conditions.
\end{proof}

The integrability condition of $\Phi+\Psi(E_{+},E_{-})$ is given by the following proposition.

\begin{prop}
A generalized almost complex structure $\Phi+\Psi(E_{+},E_{-})$ on $C(M)$ is integrable if and only if a generalized almost contact structure is a strong generalized almost contact structure and $\llbracket E_{+},E_{-}\rrbracket=0$.
\end{prop}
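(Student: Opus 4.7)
The plan is to invoke Proposition~\ref{L-invo}, which reduces integrability of $\mathcal{J} := \Phi + \Psi(E_+, E_-)$ to the vanishing of $\Nij$ on its $+\sqrt{-1}$-eigenbundle $L$ on $C(M)$. A direct calculation using $\Phi(E_\pm) = 0$, $\Psi(E_+) = \partial/\partial t$, $\Psi(E_-) = dt$, $\Psi(\partial/\partial t) = -E_+$, $\Psi(dt) = -E_-$, and $\Psi|_{E^{(1,0)}} = 0$ identifies
\[
L = E^{(1,0)} \oplus \mathbb{C} K_+ \oplus \mathbb{C} K_-, \qquad K_+ := E_+ - \sqrt{-1}\,\partial/\partial t, \quad K_- := E_- - \sqrt{-1}\,dt.
\]
Sections of $TM \oplus T^*M$ are $t$-independent and bracket trivially with $\partial/\partial t$ and $dt$ on $C(M)$, so Courant brackets of generators reduce to brackets on $M$: $\llbracket K_+, K_- \rrbracket = \llbracket E_+, E_- \rrbracket$, $\llbracket A, K_\pm \rrbracket = \llbracket A, E_\pm \rrbracket$, and $\llbracket A, B \rrbracket_{C(M)} = \llbracket A, B \rrbracket_M$ for $A, B \in E^{(1,0)}$.

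The nontrivial triples for $\Nij|_L$ (those without repeats) are (I) $(A, B, C) \in (E^{(1,0)})^3$, (II) $(A, B, K_+)$, (III) $(A, B, K_-)$, and (IV) $(A, K_+, K_-)$. Since $\partial/\partial t$ and $dt$ pair trivially with sections of $TM \oplus T^*M$, cases (I)--(III) collapse to the $M$-Nijenhuis values $\Nij_M(A, B, C)$, $\Nij_M(A, B, E_+)$, and $\Nij_M(A, B, E_-)$ respectively. The other potential triples in $L^\pm$ vanish trivially by alternation of $\Nij_M$ (e.g.\ $\Nij_M(E_+, E_+, \cdot) = 0$), so (I) and (II) vanishing together is $\Nij_M|_{L^+} = 0$, and (I) and (III) together is $\Nij_M|_{L^-} = 0$. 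Thus by Proposition~\ref{L-invo}, (I)--(III) all vanish if and only if $(\Phi, E_\pm)$ is a strong generalized contact structure.

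The crux is case (IV). Expanding and using the reductions above,
\[
\Nij(A, K_+, K_-) = \tfrac{1}{3}\bigl( \langle \llbracket A, E_+ \rrbracket, E_- \rangle + \langle \llbracket E_+, E_- \rrbracket, A \rangle - \langle \llbracket A, E_- \rrbracket, E_+ \rangle \bigr).
\]
I then apply the invariance of $\langle \cdot, \cdot \rangle$ under the Dorfman bracket, which for sections $X, Y, Z$ of $TM \oplus T^*M$ relates $X \langle Y, Z \rangle$ to $\langle \llbracket X, Y \rrbracket, Z \rangle$, $\langle Y, \llbracket X, Z \rrbracket \rangle$, and derivative corrections involving $\langle X, Y \rangle$, $\langle X, Z \rangle$. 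Choosing the slots from $\{E_+, E_-, A\}$ with $A \in E^{(1,0)}$: since $\langle E_+, E_- \rangle = 1/2$ is constant and $\langle A, E_\pm \rangle = 0$, all the corrections vanish, giving $\langle \llbracket A, E_+ \rrbracket, E_- \rangle = \langle A, \llbracket E_+, E_- \rrbracket \rangle = -\langle \llbracket A, E_- \rrbracket, E_+ \rangle$, and substitution yields $\Nij(A, K_+, K_-) = \langle A, \llbracket E_+, E_- \rrbracket \rangle$.

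The main obstacle is to upgrade this pairing condition to $\llbracket E_+, E_- \rrbracket = 0$. The same invariance identity with slots $(E_+, E_-, E_\pm)$ gives unconditionally $\langle \llbracket E_+, E_- \rrbracket, E_+ \rangle = \langle \llbracket E_+, E_- \rrbracket, E_- \rangle = 0$. If $\Nij(A, K_+, K_-) = 0$ for every $A \in E^{(1,0)}$, then by reality of $\llbracket E_+, E_- \rrbracket$ the orthogonality to $E^{(1,0)}$ extends to $\overline{E^{(1,0)}} = E^{(0,1)}$; combined with the two vanishings above, $\llbracket E_+, E_- \rrbracket$ is orthogonal to $E^{(1,0)} \oplus E^{(0,1)} \oplus \mathrm{span}_{\mathbb{C}}(E_+, E_-) = (TM \oplus T^*M) \otimes \mathbb{C}$, and nondegeneracy of the pairing forces $\llbracket E_+, E_- \rrbracket = 0$. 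The converse is immediate, so combining with (I)--(III) yields the equivalence.
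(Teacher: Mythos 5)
Your proposal is correct and follows essentially the same route as the paper: identify the $+\sqrt{-1}$-eigenbundle as $E^{(1,0)}\oplus\mathbb{C}K_+\oplus\mathbb{C}K_-$ and observe that all Courant brackets of generators reduce to the corresponding brackets on $M$, with the triples not involving $K_-$ (resp.\ $K_+$) encoding involutivity of $L^+$ (resp.\ $L^-$). The only divergence is the last step: where the paper extracts $\llbracket E_+,E_-\rrbracket=0$ from the fact that this bracket is a \emph{real} section that would have to lie in $L$, you compute $\Nij(A,K_+,K_-)=\langle\llbracket E_+,E_-\rrbracket,A\rangle$ and invoke nondegeneracy of the pairing on $E^{(1,0)}\oplus E^{(0,1)}\oplus\mathbb{C}E_+\oplus\mathbb{C}E_-$ — an equally valid finish.
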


\begin{proof}
Since $E^{(1,0)}$ is $+\sqrt{-1}$-eigenbundle of $\Phi$,
$+\sqrt{-1}$-eigenbundle of $\Phi+\Psi(E_{+},E_{-})$ is generated by
\begin{equation*}
E^{(1,0)},\;E_{+}-\sqrt{-1}\frac{\partial}{\partial t},\;E_{-}-\sqrt{-1}dt.
\end{equation*}
By simple calculations, we have
\begin{align*}
&\left\llbracket X+\alpha,E_{+}-\sqrt{-1}\frac{\partial}{\partial t}\right\rrbracket
=\llbracket X+\alpha,E_{+}\rrbracket\\
&\left\llbracket X+\alpha,E_{-}-\sqrt{-1}dt\right\rrbracket
=\llbracket X+\alpha,E_{-}\rrbracket\\
&\left\llbracket E_{+}-\sqrt{-1}\frac{\partial}{\partial t},E_{-}-\sqrt{-1}dt\right\rrbracket
=\llbracket E_{+},E_{-}\rrbracket,
\end{align*}
where $X+\alpha \in \Gamma(E^{(1,0)})$.
Since $\llbracket E_{+},E_{-}\rrbracket$ is a real section,
$+\sqrt{-1}$-eigenbundle of $\Phi+\Psi(E_{+},E_{-})$ is Courant involutive if and only if both $L^{\pm}$ are Courant involutive and $\llbracket E_{+},E_{-}\rrbracket=0$.
\end{proof}

Let $R$ be an endomorphism of $TM\oplus T^{*}M$ given by
\begin{equation*}
R=
\begin{pmatrix}
r^{-1}&0\\0&r
\end{pmatrix}
=
\begin{pmatrix}
e^{-t}&0\\0&e^{t}
\end{pmatrix}.
\end{equation*}
Then the adjoints
\begin{equation*}
R(\Phi+\Psi(E_{+},E_{-}))R^{-1}
\end{equation*}
are also generalized almost complex structures on $C(M)$.
Let $g$ be a Riemannian metric on $M$.
In Sasakian geometry, the Riemannian cone metric on $C(M)$ is
\begin{equation*}
\tilde{g}=dr^{2}+r^{2}g.
\end{equation*}
Since $R(\Phi+\Psi(E_{+},E_{-}))R^{-1}$ correspond to the cone metric,
$R(\Phi+\Psi(E_{+},E_{-}))R^{-1}$ is more important than $\Phi+\Psi(E_{+},E_{-})$ when we consider about Sasakian structures.

The integrability condition of $R(\Phi+\Psi(E_{+},E_{-}))R^{-1}$ is given by the following theorem.

\begin{thm}\label{int}
A generalized almost complex structure $R(\Phi+\Psi(E_{+},E_{-}))R^{-1}$ on $C(M)$ is integrable if and only if the Nijenhuis operator on $M$ satisfies
\begin{align*}
&\Nij_{M}(A,B,C)\\
&=2\sqrt{-1}\left(\langle E_{-},A\rangle\langle B,C\rangle_{-}
+\langle E_{-},B\rangle\langle C,A\rangle_{-}
+\langle E_{-},C\rangle\langle A,B\rangle_{-}
\right)
\end{align*}
for any $A,B,C \in \Gamma(E^{(1,0)}\oplus L_{E_{+}}\oplus L_{E_{-}})$,
where
\begin{equation*}
\langle X+\alpha,Y+\beta\rangle_{-}=\frac{1}{2}\left(\alpha(Y)-\beta(X)\right)
\end{equation*}
\end{thm}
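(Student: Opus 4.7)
My plan is to apply Proposition \ref{L-invo}, which reduces the integrability of $R(\Phi+\Psi(E_{+},E_{-}))R^{-1}$ on $C(M)$ to the vanishing of $\Nij$ on its $+\sqrt{-1}$-eigenbundle $\tilde L$. By construction $\tilde L = R(L)$, where $L$ is the eigenbundle of $\Phi+\Psi(E_{+},E_{-})$ identified in the proof of the preceding proposition, so $\tilde L$ is generated by
\begin{equation*}
r^{-1}X + r\alpha\ \ (X+\alpha \in E^{(1,0)}),\qquad r^{-1}\xi_{+} + r\eta_{+} - \sqrt{-1}\,r^{-1}\tfrac{\partial}{\partial t},\qquad r^{-1}\xi_{-} + r\eta_{-} - \sqrt{-1}\,r\,dt.
\end{equation*}
Because $\Nij$ is $C^{\infty}$-multilinear on a maximal isotropic subbundle, it suffices to verify its vanishing on triples of these generators.

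The next step is to compute all Courant brackets of pairs of generators, using $dr = r\,dt$ together with elementary identities such as
\begin{equation*}
\mathcal{L}_{r^{-1}X}(r\beta) = \mathcal{L}_{X}\beta - (\iota_{X}\beta)\,dt,\qquad [r^{-1}X,\,r^{-1}Y] = r^{-2}[X,Y],
\end{equation*}
and their analogues for the $\tfrac{\partial}{\partial t}$- and $dt$-summands. I expect the outcome to be that conjugation by $R$ adds to the ``naive'' Courant bracket on $M$ explicit correction terms supported along $dt$ and $\tfrac{\partial}{\partial t}$, whose coefficients are precisely the alternating pairings $\langle \cdot,\cdot\rangle_{-}$. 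Note that the $\tfrac{\partial}{\partial t}$-summand appears only through the $E_{+}$-generator and the $dt$-summand only through the $E_{-}$-generator; this is the structural reason why the eventual correction is weighted by $\langle E_{-},\cdot\rangle$ rather than $\langle E_{+},\cdot\rangle$.

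Assembling these brackets into the cyclic sum $\Nij(A,B,C) = \tfrac{1}{3}(\langle\llbracket A,B\rrbracket,C\rangle + \text{cyclic})$ on $C(M)$, I then split the result using the orthogonal decomposition of $\langle -,-\rangle$ on $TC(M)\oplus T^{*}C(M)$ into the $M$-pairing and the pairing between $\tfrac{\partial}{\partial t}$ and $dt$. The $M$-part should reproduce $\Nij_{M}(A,B,C)$ for $A,B,C \in E^{(1,0)}\oplus L_{E_{+}}\oplus L_{E_{-}}$, while the remaining part should sum to $-2\sqrt{-1}$ times the cyclic combination on the right-hand side of the claimed formula; equating the total to zero then yields the stated equivalence. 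The main obstacle is combinatorial bookkeeping: making sure that the $r^{\pm 1}$-corrections collected from several different brackets reassemble with the correct signs into the skew pairing $\langle -, -\rangle_{-}$ with exactly the coefficient $2\sqrt{-1}$. The defining identities $\Phi E_{\pm}=0$, $\langle E_{\pm},E_{\pm}\rangle=0$, and $2\langle E_{+},E_{-}\rangle=1$ will be used repeatedly to collapse the numerous terms into the compact form stated in the theorem.
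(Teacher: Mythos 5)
Your proposal follows essentially the same route as the paper's own proof: reduce integrability to the vanishing of $\Nij_{C(M)}$ on the $+\sqrt{-1}$-eigenbundle $R(L)$, use tensoriality on the (maximal isotropic) eigenbundle to restrict to the generators $R(X+\alpha)$, $R(E_{+}-\sqrt{-1}\tfrac{\partial}{\partial t})$, $R(E_{-}-\sqrt{-1}dt)$, compute the $R$-corrected Courant brackets, and identify the resulting $dt$- and $\tfrac{\partial}{\partial t}$-corrections with the cyclic $\langle E_{-},\cdot\rangle\langle\cdot,\cdot\rangle_{-}$ terms. The preliminary identities and the structural explanation of why the weight is $\langle E_{-},\cdot\rangle$ are correct, so carrying out the stated bookkeeping reproduces the paper's argument.
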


\begin{proof}
Let $L$ be $+\sqrt{-1}$-eigenbundle of $R(\Phi+\Psi(E_{+},E_{-}))R^{-1}$.
$R(\Phi+\Psi(E_{+},E_{-})R^{-1}$ is integrable if and only if $\Nij_{C(M)}\vert_{L}=0$.
Since the $+\sqrt{-1}$-eigenbundle $L$ is isotropic, $\Nij_{C(M)}\vert_{L}$ is a trilinear operator.
Thus we only need to consider elements in $E^{(1,0)}$, $E_{+}$ and $E_{-}$.
Let $X+\alpha,Y+\beta,Z+\gamma$ be elements of $E^{(1,0)}$.
Then we have from Definition \ref{gac}
\begin{align*}
&\llbracket R(X+\alpha),R(Y+\beta)\rrbracket\\
&=e^{-t}R\llbracket X+\alpha,Y+\beta\rrbracket+(\alpha(Y)-\beta(X))dt.
\end{align*}
Similarly, we have
\begin{align*}
&\left\llbracket R(X+\alpha),R\left(E_{+}-\sqrt{-1}\frac{\partial}{\partial t}\right)\right\rrbracket\\
&=e^{-t}R\llbracket X+\alpha,E_{+}\rrbracket-\sqrt{-1}e^{-2t}X+\sqrt{-1}\alpha+(\alpha(\xi_{+})-\eta_{+}(X))dt,\\
&\llbracket R(X+\alpha),R(E_{-}-\sqrt{-1}dt)\rrbracket\\
&=e^{-t}R\llbracket X+\alpha,E_{-}\rrbracket +(\alpha(\xi_{-})-\eta_{-}(X))dt,\\
&\left\llbracket R\left(E_{+}-\sqrt{-1}\frac{\partial}{\partial t}\right),R(E_{-}-\sqrt{-1}dt)\right\rrbracket\\
&=e^{-t}R\llbracket E_{+},E_{-}\rrbracket +\sqrt{-1}e^{-2t}\xi_{-}-\sqrt{-1}\eta_{-}+(\eta_{+}(\xi_{-}-\eta_{-}(\xi_{+}))dt.
\end{align*}
Then it follows that
\begin{align*}
&\Nij_{C(M)}(R(X+\alpha),R(Y+\beta),R(Z+\gamma))\\
&=e^{-t}\Nij_{M}(X+\alpha,Y+\beta,Z+\gamma).\\
\end{align*}
Similarly, we have
\begin{align*}
&\Nij_{C(M)}\left(R(X+\alpha),R(Y+\beta),R\left(E_{+}-\sqrt{-1}\frac{\partial}{\partial t}\right)\right)\\
&=e^{-t}\Nij_{M}(X+\alpha,Y+\beta,E_{+})+\frac{1}{2}\sqrt{-1}e^{-t}(\beta(X)-\alpha(Y)),\\
&\Nij_{C(M)}\left(R(X+\alpha),R(Y+\beta),R(E_{-}-\sqrt{-1}dt)\right)\\
&=e^{-t}\Nij_{M}(X+\alpha,Y+\beta,E_{-}),\\
&\Nij_{C(M)}\left(R(X+\alpha),R\left(E_{+}-\sqrt{-1}\frac{\partial}{\partial t}\right),R(E_{-}-\sqrt{-1}dt)\right)\\
&=e^{-t}\Nij_{M}(X+\alpha,E_{+},E_{-})-\frac{1}{2}\sqrt{-1}e^{-t}(\eta_{-}(X)-\alpha(\xi_{-})).
\end{align*}
Thus we obtain
\begin{align*}
&\Nij_{C(M)}(A,B,C)\\
&=e^{-t}\Nij_{M}(A,B,C)
-2\sqrt{-1}e^{-t}\langle E_{-},A\rangle\langle B,C\rangle_{-}\\
&\qquad-2\sqrt{-1}e^{-t}\langle E_{-},B\rangle\langle C,A\rangle_{-}
	-2\sqrt{-1}e^{-t}\langle E_{-},C\rangle\langle A,B\rangle_{-}
\end{align*}
for any $A,B,C \in \Gamma(E^{(1,0)}\oplus L_{E_{+}}\oplus L_{E_{-}})$.
Therefore the integrability condition is given by
\begin{align*}
&\Nij_{M}(A,B,C)\\
&=2\sqrt{-1}\left(\langle E_{-},A\rangle\langle B,C\rangle_{-}
+\langle E_{-},B\rangle\langle C,A\rangle_{-}
+\langle E_{-},C\rangle\langle A,B\rangle_{-}
\right)
\end{align*}
for any $A,B,C \in \Gamma(E^{(1,0)}\oplus L_{E_{+}}\oplus L_{E_{-}})$.
\end{proof}

An immediate corollary of Theorem \ref{int} is 

\begin{cor}
Let $(\Phi, E_{\pm})$ be a generalized almost contact structure.
If $R(\Phi+\Psi(E_{+},E_{-}))R^{-1}$ is a generalized complex structure on $C(M)$,
then $E^{(1,0)}\oplus L_{E_{-}}$ is Courant involutive.
Therefore $(\Phi,E_{\pm})$ is a generalized contact structure.
\end{cor}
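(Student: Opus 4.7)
The plan is to deduce the Courant involutivity of $L^{-}=E^{(1,0)}\oplus L_{E_{-}}$ directly from the integrability formula of Theorem \ref{int}, combined with the characterization of involutivity via the Nijenhuis tensor on isotropic subbundles (Proposition \ref{L-invo}).

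First I would recall that $L^{-}$ has already been shown to be isotropic (it is even maximal isotropic, of rank $n+1$ where $\dim M=2n+1$), so by Proposition \ref{L-invo} it suffices to verify $\Nij_{M}\vert_{L^{-}}=0$. Since $R(\Phi+\Psi(E_{+},E_{-}))R^{-1}$ is by hypothesis a genuine generalized complex structure on $C(M)$, Theorem \ref{int} applies and gives, for arbitrary $A,B,C\in\Gamma(E^{(1,0)}\oplus L_{E_{+}}\oplus L_{E_{-}})$,
\begin{equation*}
\Nij_{M}(A,B,C)=2\sqrt{-1}\bigl(\langle E_{-},A\rangle\langle B,C\rangle_{-}+\langle E_{-},B\rangle\langle C,A\rangle_{-}+\langle E_{-},C\rangle\langle A,B\rangle_{-}\bigr).
\end{equation*}

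The key observation is then that every factor of the form $\langle E_{-},\cdot\rangle$ in the right-hand side vanishes on $L^{-}$. Indeed, if $A\in E^{(1,0)}$ then by the very definition of $E^{(1,0)}$ we have $\langle A,E_{\pm}\rangle=0$; and if $A\in L_{E_{-}}$ then $A=fE_{-}$ for some function $f$, so $\langle E_{-},A\rangle=f\langle E_{-},E_{-}\rangle=0$ by the defining relation $\langle E_{\pm},E_{\pm}\rangle=0$ of a generalized almost contact structure. Since $L^{-}=E^{(1,0)}\oplus L_{E_{-}}$, this shows $\langle E_{-},A\rangle=0$ for every $A\in\Gamma(L^{-})$, and symmetrically for $B,C$.

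Consequently the right-hand side of the displayed identity vanishes identically on $L^{-}$, so $\Nij_{M}\vert_{L^{-}}=0$. Applying Proposition \ref{L-invo} to the maximal isotropic subbundle $L^{-}$ yields that $L^{-}$ is Courant involutive, which by the definition in Section \ref{sgcs} is exactly the statement that $(\Phi,E_{\pm})$ is a generalized contact structure. There is no serious obstacle here; the entire content of the corollary is that among the three pieces $L_{E_{+}}$, $L_{E_{-}}$, $E^{(1,0)}$, the integrability on the cone forces the $E_{-}$-pairing to kill the obstruction, singling out the minus side.
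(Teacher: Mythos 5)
Your argument is correct and is essentially the paper's own proof: the paper likewise invokes Theorem \ref{int} and notes that the right-hand side vanishes for $A,B,C\in E^{(1,0)}\oplus L_{E_{-}}$ (you merely spell out why, namely $\langle E_{-},\cdot\rangle=0$ on both summands), then concludes involutivity via Proposition \ref{L-invo}. One trivial slip in a parenthetical: $L^{-}$ has complex rank $2n+1$ (half of $4n+2$), not $n+1$, though the claim of maximal isotropy that you actually use is right.
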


\begin{proof}
It follows from Theorem \ref{int} that
\begin{align*}
&\Nij_{M}(A,B,C)=0,\qquad  A,B, C \in E^{(1,0)}\oplus L_{E_{-}}.
\end{align*}
Therefore $E^{(1,0)}\oplus L_{E_{-}}$ is Courant involutive.
\end{proof}

\begin{defn}
Let $(\Phi,E_{\pm})$ be a generalized almost contact structure.
If a generalized almost complex structure $R(\Phi+\Psi(E_{+},E_{-}))R^{-1}$ is integrable, a generalized almost contact structure is a called normal generalized almost contact structure.
\end{defn}

Note that this definition differs from a Vaisman's definition \cite{Vais}.

We define a generalized Sasakian structure in terms of a generalized almost contact metric structure.

\begin{defn}\label{gs}
A generalized Sasakian structure on $M$ is a generalized almost contact metric structure $(G,\Phi,E_{\pm})$ such that $R(\Phi+\Psi(E_{+},E_{-}))R^{-1}$ and $R(G\Phi+\Psi(GE_{+},GE_{-}))R^{-1}$ are generalized complex structures on $C(M)$.
\end{defn}

A generalized Sasakian structure $(G,\Phi,E_{\pm})$ on $M$ induces a generalized K\"ahler structure $(R(\Phi+\Psi(E_{+},E_{-}))R^{-1},R(G\Phi+\Psi(GE_{+},GE_{-})R^{-1})$ on $C(M)$.

\begin{rem}\label{b-on-cone}
Definition \ref{gs} coincides with Vaisman's definition in the case of $\kappa=0$ under a modification of degree $r$ \cite{Vais,Vais3} (also see Proposition \ref{L_t-ver}).
The Sasakian structure due to Vaisman allows transformations by $2$-forms $2rdr\wedge\kappa$ $(\kappa \in T^{*}M)$, however the one by our definition does not admit such a $B$-field transformation.
Generalized almost contact structures admit $B$-field transformations by $2$-forms on $M$.
However, Lemma \ref{int-close} and
\begin{equation*}
d(r^{2}\alpha)\neq 0,\qquad \forall\alpha \in \Lambda^{2}T^{*}M
\end{equation*}
show that our definition of generalized Sasakian structures does not admit any $B$-field transformation.
If $\langle \kappa ,E_{\pm}\rangle=0$, there exists a generalized almost contact structure $(\Phi^{\kappa},E_{\pm}^{\kappa})$ such that
\begin{equation*}
\begin{pmatrix}
1&0\\ \frac{2}{r}dr\wedge\kappa&1
\end{pmatrix}
(\Phi+\Psi(E_{+},E_{-}))
\begin{pmatrix}
1&0\\ -\frac{2}{r}dr\wedge\kappa&1
\end{pmatrix}
=\Phi^{\kappa}+\Psi(E_{+}^{\kappa},E_{-}^{\kappa}).
\end{equation*}
However $(G,\Phi^{\kappa},E_{\pm}^{\kappa})$ is not a generalized almost contact metric structure.
More details about transformations by $2$-forms $2rdr\wedge\kappa$ appear in section \ref{ext-gen-con}.
\end{rem}

\begin{exam}
Let $(g,\varphi,\xi,\eta)$ be a Sasakian structure.
If we set
\begin{equation*}
G=
\begin{pmatrix}
0&g^{-1}\\
g&0
\end{pmatrix}
,\quad
\Phi=
\begin{pmatrix}
\varphi&0\\
0&-\varphi^{*}
\end{pmatrix}
,\quad
E_{+}=\xi
,\quad
E_{-}=\eta
\end{equation*}
then $(G,\Phi,E_{\pm})$ becomes a generalized Sasakian structure.
\end{exam}

The next theorem corresponds to Theorem \ref{reconst}.

\begin{thm}\cite{Vais}\label{con-con}
A generalized Sasakian structure on a manifold $M$ is equivalent to a pair $(\varphi_{\pm},\xi_{\pm},\eta_{\pm},g)$ of normal almost contact metric structures with the same metric $g$ which satisfy the following conditions
\begin{align}
&\mathcal{L}_{\xi_{+}}\theta_{+}=-\mathcal{L}_{\xi_{-}}\theta_{-}\label{e-1}\\
&\theta_{\pm}-d\eta_{\pm}+\frac{1}{4}\mathcal{L}_{\xi_{\pm}}\mathcal{L}_{\xi_{\pm}}\theta_{\pm}=0\label{e-2}\\
&d\theta_{\pm}-\eta_{\pm}\wedge \mathcal{L}_{\xi_{\pm}}\theta_{\pm}-\frac{1}{2}\left(d\mathcal{L}_{\xi_{\pm}}\theta_{\pm}\right)^{c_{\pm}}=0,
\end{align}
where $\theta=g(\cdot,\varphi)$ and the upper indices $c_{\pm}$ denote
\begin{equation*}
\alpha^{c_{\pm}}(X_{1},\ldots,X_{k})=\alpha(\varphi_{\pm}X_{1},\ldots,\varphi_{\pm}X_{k}),\qquad \forall \alpha \in \Omega^{k}(M).
\end{equation*}
\end{thm}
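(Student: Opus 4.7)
My plan is to reduce the claim to Gualtieri's reconstruction theorem (Theorem \ref{reconst}) applied on the cone. By Definition \ref{gs}, the pair
$$\mathcal{J}_1=R(\Phi+\Psi(E_+,E_-))R^{-1},\qquad \mathcal{J}_2=R(G\Phi+\Psi(GE_+,GE_-))R^{-1}$$
is a generalized K\"ahler structure on $C(M)$, and conversely any such structure that is $\partial_t$-invariant and satisfies $\mathcal{J}_i\partial_t,\mathcal{J}_i dt\in TM\oplus T^*M$ arises from a generalized almost contact metric structure on $M$ by the argument of Proposition \ref{L_t-ver}. Applying Theorem \ref{reconst} on $C(M)$ thus turns the problem into identifying the resulting bi-Hermitian data $(\tilde g,\tilde b,\tilde J_\pm)$ in terms of data on $M$ and translating the compatibility condition.

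First I would compute $-\mathcal{J}_1\mathcal{J}_2$ to extract $\tilde g$ and $\tilde b$. Because $G$ is the generalized metric determined by $g$ with $GE_+=E_-$, and $R$ scales $TM$ by $r^{-1}$ and $T^*M$ by $r$, the pieces of $\Psi$ involving $\partial/\partial t$ and $dt$ assemble into the Riemannian cone metric $\tilde g=dr^2+r^2g$ on $C(M)$, while $\tilde b$ is pulled back from $M$ with no $dr$-component (cf.\ Remark \ref{b-on-cone}, which precisely blocks the $2rdr\wedge\kappa$ transformations). Next, projecting the $\pm$-definite eigenspaces of $G(\tilde g,\tilde b)$ to $TC(M)$ and following $\mathcal{J}_1$ yields two almost complex structures $\tilde J_\pm$ on $C(M)$; the $\partial_t$-invariance and the explicit form of $\mathcal{J}_1$, combined with the standard cone/almost contact correspondence, force
$$\tilde J_\pm=\varphi_\pm+\eta_\pm\otimes r\tfrac{\partial}{\partial r}-\tfrac{1}{r}dr\otimes\xi_\pm$$
for a triple $(\varphi_\pm,\xi_\pm,\eta_\pm)$ on $M$. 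The Hermitian condition on $(\tilde g,\tilde J_\pm)$ is then equivalent to $(g,\varphi_\pm,\xi_\pm,\eta_\pm)$ being an almost contact metric structure sharing the metric $g$, and integrability of $\tilde J_\pm$ is by definition normality of $(\varphi_\pm,\xi_\pm,\eta_\pm)$.

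It remains to translate the Gualtieri compatibility $d\tilde b(X,Y,Z)=d\tilde\omega_+(\tilde J_+X,\tilde J_+Y,\tilde J_+Z)=-d\tilde\omega_-(\tilde J_-X,\tilde J_-Y,\tilde J_-Z)$ into conditions on $M$. Writing the K\"ahler form on the cone schematically as $\tilde\omega_\pm=r^2\theta_\pm+r\,dr\wedge\eta_\pm$, differentiating, and acting componentwise by $\tilde J_\pm$ produces a four-form on $C(M)$. Separating it by the number of $dr$-factors yields three independent equations on $M$: the piece with two $dr$-factors gives (\ref{e-1}) from the $\pm$-equality, the piece with a single $dr$ gives (\ref{e-2}) (with the fourth-order correction $\tfrac14\mathcal{L}_{\xi_\pm}\mathcal{L}_{\xi_\pm}\theta_\pm$ arising from the double application of the $\eta_\pm\otimes r\partial_r$ part of $\tilde J_\pm$ to an $\eta_\pm$-factor), and the purely tangential piece yields the third equation.

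The main obstacle is this final translation: bookkeeping the $\varphi_\pm$-action simultaneously with the $r$-derivatives. Tracking the factors of $r$ introduced by $R$ and by $r\partial_r$, and confirming the precise coefficient $\tfrac14$, requires care, as does verifying that the $dr$-free components of $d\tilde b$ (which is pulled back from $M$) correctly absorb the tangential parts of $d\tilde\omega_\pm(\tilde J_\pm\cdot,\tilde J_\pm\cdot,\tilde J_\pm\cdot)$. Once the decomposition by $dr$-degree is carried out, however, the three equations are forced by the single identity on $C(M)$, and the equivalence follows in both directions because every step (Theorem \ref{reconst}, the cone correspondence, and Proposition \ref{L_t-ver}) is reversible.
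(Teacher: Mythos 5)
The paper does not actually prove this theorem; it is quoted from Vaisman, and the only guidance given is the sentence preceding it, ``The next theorem corresponds to Theorem \ref{reconst}.'' Your overall strategy --- pass to the cone, apply Gualtieri's reconstruction theorem to the generalized K\"ahler pair of Definition \ref{gs}, identify the resulting bi-Hermitian data $(\tilde g,\tilde b,\tilde J_\pm)$ as cone data coming from a pair of almost contact metric structures on $M$, and then translate the compatibility condition --- is exactly the intended route, and its first half (a single metric $g$ from the single $\tilde g$, normality from integrability of $\tilde J_\pm$) is sound modulo the routine verification that $\tilde J_\pm$ indeed takes the cone form.

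The gap is in the final translation step, which is where the entire content of conditions (\ref{e-1}), (\ref{e-2}) and the third (unlabelled) equation lives. First, your proposed decomposition ``by the number of $dr$-factors'' into three pieces cannot be carried out as stated: a differential form on $C(M)=M\times\mathbb{R}_{>0}$ has $dr$-degree at most one, so each of the two $3$-form identities $d\tilde b=d\tilde\omega_\pm(\tilde J_\pm\cdot,\tilde J_\pm\cdot,\tilde J_\pm\cdot)$ splits a priori into only two pieces; the three conditions must instead come from separating by $dr$-degree together with powers of $r$ and contraction with $\xi_\pm$, and from comparing the $+$ and $-$ equations against each other. Second, $\tilde b$ is not pulled back from $M$: conjugation by $R$ rescales two-forms, so $\tilde b=r^{2}b$ for a two-form $b$ on $M$ (compare the factor $-r^{2}\cos(2z)\omega'$ in Example \ref{ex:GS, non-Sasakian pair}), hence $d\tilde b=r^{2}db+2r\,dr\wedge b$ has a nontrivial $dr$-component. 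It is the equality of this component with the $dr$-components of $d\tilde\omega_+(\tilde J_+\cdot,\tilde J_+\cdot,\tilde J_+\cdot)$ and of $-d\tilde\omega_-(\tilde J_-\cdot,\tilde J_-\cdot,\tilde J_-\cdot)$ that determines $b$ in terms of $\mathcal{L}_{\xi_\pm}\theta_\pm$ and thereby forces (\ref{e-1}); your ``two-$dr$'' mechanism for (\ref{e-1}) is not what happens. Since the precise coefficients --- in particular the $\tfrac14\mathcal{L}_{\xi_\pm}\mathcal{L}_{\xi_\pm}\theta_\pm$ term in (\ref{e-2}), whose origin you only conjecture --- \emph{are} the theorem, deferring this computation as bookkeeping while misdescribing its structure leaves the proof incomplete.
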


Note that a pair of Sasakian structures with the same metric satisfies these conditions.
In the case of a compact connected 3-dimensional manifold, a generalized Sasakian structure is equivalent to a pair of Sasakian structures with the same metric.
In fact, we have

\begin{thm}\label{th:GS on 3-mfds}
Let $M$ be a compact connected 3-dimensional manifold.
Then a pair $(\varphi_{\pm},\xi_{\pm},\allowbreak \eta_{\pm},g)$ of normal almost contact metric structures corresponds to a generalized Sasakian structure if and only if both structures are Sasakian.
\end{thm}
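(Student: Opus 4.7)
The ``if'' direction is immediate from Theorem~\ref{con-con}: for a Sasakian pair the Reeb fields $\xi_\pm$ are Killing, so $\mathcal{L}_{\xi_\pm}\theta_\pm = 0$ and all three conditions collapse to the Sasakian identity $d\eta_\pm = \theta_\pm$.

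For the converse, my plan is to exploit dimension three to reduce the conditions to a scalar ODE, and then to rule out non-Sasakian configurations by a compactness argument. On a $3$-manifold the space of $2$-forms annihilating a given nonzero vector is pointwise $1$-dimensional, so $\iota_{\xi_\pm}\theta_\pm = 0$ together with Cartan's formula yields smooth functions $f_\pm$ with $\mathcal{L}_{\xi_\pm}\theta_\pm = f_\pm\theta_\pm$. Normality implies $\mathcal{L}_{\xi_\pm}\eta_\pm = 0$ and hence $d\eta_\pm = C_\pm\theta_\pm$, and the third condition in Theorem~\ref{con-con} is automatic in dimension three, since every $3$-form on $M$ is a multiple of $\eta_\pm \wedge \theta_\pm$ while $\eta_\pm \circ \varphi_\pm = 0$ kills its $(\cdot)^{c_\pm}$ transform. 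The problem then reduces to showing $f_\pm \equiv 0$, after which (\ref{e-2}) becomes $d\eta_\pm = \theta_\pm$, the Sasakian condition.

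I would next run a case analysis on the pointwise relation between $\xi_+$ and $\xi_-$. On the open set $U := \{p : \xi_+(p) \nparallel \xi_-(p)\}$ the $2$-forms $\theta_\pm$ are linearly independent (they annihilate different directions), so (\ref{e-1}) immediately gives $f_\pm = 0$ on $U$, hence on $\overline{U}$ by continuity. On each connected component of $\mathrm{int}(M \setminus U)$, continuity and unit length force $\xi_- = \pm \xi_+$, and then $\varphi_- = \pm \varphi_+$ since orthogonal complex structures on a real $2$-plane differ by sign. Two of the four sign combinations make (\ref{e-1}) directly force $f_+ = 0$; the other two make (\ref{e-1}) vacuous but, upon adding the two instances of (\ref{e-2}), yield the scalar identity $\xi_+ f_+ + f_+^2 = -4$. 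When this identity holds on all of $M$, I would integrate against the volume form $\eta_+ \wedge \theta_+$ and use $\mathrm{div}\, \xi_+ = f_+$ (a consequence of $\mathcal{L}_{\xi_+}(\eta_+ \wedge \theta_+) = f_+\, \eta_+ \wedge \theta_+$) to obtain $\int_M \xi_+ f_+\, \mathrm{vol} = -\int_M f_+^2\, \mathrm{vol}$ by Stokes, and hence the contradiction $0 = -4\,\mathrm{Vol}(M)$. When it holds only on a proper open subregion, its boundary lies in $\overline{U}$, where both $f_+$ and $\xi_+ f_+$ vanish identically (the latter because $f_+ \equiv 0$ on the open set $U$), contradicting the limiting value $\xi_+ f_+ = -4$ forced by the identity. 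Thus $f_\pm \equiv 0$ everywhere.

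The main obstacle I expect is the mixed case, in which $U$ is a proper nonempty open subset of $M$: here neither the global integration argument nor the purely local argument from (\ref{e-1}) applies directly, and the resolution relies on the boundary-matching observation above, which uses essentially that $f_+$ is smooth on all of $M$ and that $U$ is open, so the first derivatives of $f_+$ vanish on $\overline{U}$ and cannot jump to attain the forbidden ODE value. This is also where dimension three is indispensable: in higher dimensions $\mathcal{L}_{\xi_\pm}\theta_\pm$ is no longer a scalar multiple of $\theta_\pm$, and the whole scalar reduction fails.
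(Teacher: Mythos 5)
Your proposal is correct, and it takes a genuinely different route from the paper's, even though both pivot on the same dichotomy (either $\xi_+\nparallel\xi_-$, where (\ref{e-1}) forces the Lie derivatives to vanish, or $\xi_-=\epsilon\xi_+$, $\varphi_-=\delta\varphi_+$ with $\epsilon,\delta=\pm1$, which must be excluded). The paper works on the contact locus $\{\eta_+\wedge d\eta_+\neq0\}$, shown to be nonempty by integrating (\ref{e-2}) against $\eta_+$ and applying Stokes; it then passes to Darboux coordinates, writes $\theta_+=f\,d\eta_+$, and in the parallel case derives $\varphi_-=\pm(\tfrac{2}{f}-1)\varphi_+$, so that $\varphi_\pm^2=-id+\eta_\pm\otimes\xi_\pm$ forces $f=1$ pointwise --- a purely algebraic contradiction; compactness enters only through the initial Stokes step, and the conclusion spreads by a clopen argument. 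You instead globalize the scalar reduction $\mathcal{L}_{\xi_\pm}\theta_\pm=f_\pm\theta_\pm$, and in the parallel case with $\epsilon\delta=-1$ you extract the Riccati identity $\xi_+f_++f_+^2=-4$, killed either by integrating against $\eta_+\wedge\theta_+$ (where compactness enters for you) or by continuity of $\xi_+f_++f_+^2$ across the boundary of the bad region, which lies in $\overline{U}$. Your route has the merit of making the role of compactness transparent: in the non-compact Example \ref{ex:GS, non-Sasakian pair} one has $\xi_-=\xi_+$, $\varphi_-=-\varphi_+$ and $f_+=2\cot(2z)$, which satisfies $\xi_+f_++f_+^2=-4$ identically, so your Riccati equation is exactly the one the counterexample solves and which admits no global solution on a closed manifold. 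If you write this up, spell out two small points: the relations $f_-=\epsilon f_+$ and $d\eta_-=\epsilon d\eta_+$ on components of $\mathrm{int}(M\setminus U)$, which are what let you combine the two instances of (\ref{e-2}); and the fact that normality gives $\mathcal{L}_{\xi_\pm}\eta_\pm=0$, which you use both for $d\eta_\pm=C_\pm\theta_\pm$ and for $\mathrm{div}\,\xi_+=f_+$.
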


\begin{proof}
A normal almost contact metric structure $(\varphi,\xi,\eta,g)$ is a Sasakian structure if and only if $\theta=d\eta$, where $\theta=g(\cdot,\varphi)$ (c.f. Definition 6.4.4 and Definition 6.5.13 in \cite{Boy-Gal}).
Thus it is sufficient to show that $\theta_{\pm}=d\eta_{\pm}$.
Since $M$ is 3-dimensional, we have
\begin{equation*}
\eta_{+}\wedge d\mathcal{L}_{\xi_{+}}\theta_{+}=0.
\end{equation*}
The inner product by $\xi_{+}$ yields
\begin{equation*}
\eta_{+}\wedge \mathcal{L}_{\xi_{+}}\mathcal{L}_{\xi_{+}}\theta_{+}=d\mathcal{L}_{\xi_{+}}\theta_{+}.
\end{equation*}
From (\ref{e-2}) and Stokes' theorem, we have
\begin{equation*}
0\neq 
\int\eta_{+}\wedge\theta_{+}
=\int\eta_{+}\wedge\left(d\eta_{+}-\frac{1}{4}\mathcal{L}_{\xi_{+}}\mathcal{L}_{\xi_{+}}\theta_{+}\right)
=\int\eta_{+}\wedge d\eta_{+}.
\end{equation*}
Let $U$ be the open set given by
\begin{equation*}
U=\{x \in M\;;\; (\eta_{+}\wedge d\eta_{+})_{x}\neq 0\}.
\end{equation*}
Then $U$ is not empty.
It follows from Darboux's theorem that we have local coordinates $(x,y,z)$ such that
\begin{equation*}
\eta_{+}=dz-ydx,\quad\xi_{+}=\frac{\partial}{\partial z}.
\end{equation*}
Since $\iota_{\xi_{+}}\theta_{+}=0$, there exits a function $f\neq 0$ such that
\begin{equation*}
\theta_{+}=fdx\wedge dy=fd\eta_{+}.
\end{equation*}
From (\ref{e-1}), we have
\begin{equation*}
0=-\iota_{\xi_{-}}\mathcal{L}_{\xi_{-}}\theta_{-}=\iota_{\xi_{-}}\mathcal{L}_{\xi_{+}}\theta_{+}
=\iota_{\xi_{-}}\left(\frac{\partial f}{\partial z}dx\wedge dy\right).
\end{equation*}

Let $V$ be the open set given by
\begin{equation*}
V=\left\{x \in U \;;\;\frac{\partial f}{\partial z}\neq 0\right\}
\end{equation*}
We assume that $V$ is not empty.
Then we have $\xi_{+}=\pm\xi_{-}$ on $V$.
Since $\iota_{\xi_{-}}\theta_{-}=0$, we obtain
\begin{equation*}
\theta_{-}=hd\eta_{-}=\pm hd\eta_{+},
\end{equation*}
where $h$ is a function.
From (\ref{e-1}), we have
\begin{equation*}
\frac{\partial f}{\partial z}d\eta_{+}=-\frac{\partial h}{\partial z}d\eta_{+}.
\end{equation*}
Then, from (\ref{e-2}), we have
\begin{align*}
\left(f-1+\frac{1}{4}\frac{\partial^{2} f}{\partial z^{2}}\right)d\eta_{+}&=0,\\
\pm\left(h-1-\frac{1}{4}\frac{\partial^{2} f}{\partial z^{2}}\right)d\eta_{+}&=0.
\end{align*}
Thus it follows that
\begin{equation*}
f-1=-(h-1).
\end{equation*}
Thus, for $X, Y \in TM$, we obtain
\begin{equation*}
g(Y,\varphi_{-}X)=\theta_{-}(Y,X)=\pm \left(\frac{2}{f}-1\right)\theta_{+}(Y,X)
=g\left(Y,\pm\left(\frac{2}{f}-1\right)\varphi_{+} X\right).
\end{equation*}
Thus it follows that
\begin{equation*}
\varphi_{-}=\pm\left(\frac{2}{f}-1\right)\varphi_{+}.
\end{equation*}
Since $\varphi_{\pm}^{2}=-id+\eta_{\pm}\otimes\xi_{\pm}$, we have $f=1$.
However this is a contradiction because $\frac{\partial f}{\partial z}\neq 0$.
Therefore $\frac{\partial f}{\partial z}=0$ on $U$, we have $L_{\xi_{+}}\theta_{+}=0$ and $\theta_{\pm}=d\eta_{\pm}$ on $U$.
Since $\eta_{+}\wedge\theta_{+}\neq 0$ on $M$,
we have $\overline{U}\subset U$.
Since $M$ is connected and $U$ is not empty,
we get $U=M$ and $\theta_{\pm}=d\eta_{\pm}$ on $M$.
\end{proof}

On a compact 3-dimensional manifold, a generalized Sasakian structure is equivalent to a pair of Sasakian structures.
However, there exists a non-compact example which is not a pair of Sasakian structures.

\begin{exam}\label{ex:GS, non-Sasakian pair}
Let $(M',g',J',\omega')$ be a K\"ahler manifold and $M=M'\times (0,\pi/2)$.
To construct normal almost contact metric structures, we define
\begin{align*}
\varphi&=J',\quad \xi=\frac{\partial}{\partial z},\quad \eta=dz\\
g&=\sin(2z)g'+dz\otimes dz,
\end{align*}
where $z$ denotes the coordinate on $(0,\pi/2)$.
Then $(g,\pm\varphi,\xi,\eta)$ are normal almost contact metric structures but not Sasakian structures.

On $C(M)=M'\times (0,\pi/2)\times\mathbb{R}_{>0}$, we define complex structures and a metric by
\begin{align*}
J_{\pm}&=\pm\varphi-\frac{1}{r}dr\otimes\frac{\partial}{\partial z} +dz\otimes r\frac{\partial}{\partial r},\\
\tilde{g}&=r^{2}g+dr\otimes dr.
\end{align*}
Then $(\tilde{g},J_{\pm})$ is a bi-Hermitian structure and
\begin{align*}
\omega_{\pm}&=\tilde{g}(\cdot,J_{\pm}\cdot)=\pm r^{2}\sin(2z)\omega'+2rdr\wedge dz,\\
d\omega_{\pm}&=\pm2r\sin(2z)dr\wedge \omega'\pm 2r^{2}\cos(2z)dz\wedge \omega'.
\end{align*}
Thus
\begin{align*}
&d\omega_{\pm}(J_{\pm}\cdot,J_{\pm}\cdot,J_{\pm}\cdot)\\
&=\pm 2r\sin(2z) (rdz)\wedge\omega'\pm2r^{2}\cos(2z)\left(-\frac{1}{r}dr\right)\wedge\omega'\\
&=\pm2r^{2}\sin(2z)dz\wedge \omega'\mp2r\cos(2z)dr\wedge \omega'\\
&=\pm d(-r^{2}\cos(2z)\omega').
\end{align*}
Therefore $(\tilde{g},-r^{2}\cos(2z)\omega',J_{\pm})$ is a generalized K\"ahler structure and induces a generalized Sasakian structure.
If we set $\rho=(\omega')^{-1}$ on $M'$, we have
\begin{gather*}
G=
\begin{pmatrix}
1&0\\
-\cos(2z)\omega'&1
\end{pmatrix}
\begin{pmatrix}
0&g^{-1}\\
g&0
\end{pmatrix}
\begin{pmatrix}
1&0\\
\cos(2z)\omega'&1
\end{pmatrix},
\\
\Phi=
\begin{pmatrix}
1&0\\
-\cos(2z)\omega'&1
\end{pmatrix}
\begin{pmatrix}
0&\frac{1}{\sin(2z)}\rho\\
-\sin(2z)\omega'&0
\end{pmatrix}
\begin{pmatrix}
1&0\\
\cos(2z)\omega'&1
\end{pmatrix},\\
E_{+}=
\begin{pmatrix}
1&0\\
-\cos(2z)\omega'&1
\end{pmatrix}
\begin{pmatrix}
\frac{\partial}{\partial z}\\0
\end{pmatrix}
,\quad
E_{-}=
\begin{pmatrix}
1&0\\
-\cos(2z)\omega'&1
\end{pmatrix}
\begin{pmatrix}
0\\ dz
\end{pmatrix}.
\end{gather*}
\end{exam}

\section{an extension of generalized almost contact structure}\label{ext-gen-con}

In this section, we consider about a transformations by $2$-forms $2rdr\wedge\kappa$ $(\kappa \in T^{*}M)$ in Remark \ref{b-on-cone}.
By a direct calculation, we have
\begin{align*}
&\begin{pmatrix}
1&0\\\frac{2}{r}dr\wedge\kappa&1
\end{pmatrix}
(\Phi+\Psi(E_{+},E_{-}))
\begin{pmatrix}
1&0\\-\frac{2}{r}dr\wedge\kappa&1
\end{pmatrix}\\
&=\Phi-\kappa\otimes E_{-}+E_{-}\otimes\kappa\\
&\qquad +E_{-}\otimes \frac{\partial}{\partial t}-\frac{\partial}{\partial t}\otimes E_{-}\\
&\qquad +(E_{+}+\Phi\kappa+2\langle E_{-},\kappa\rangle\kappa)\otimes dt-dt\otimes (E_{+}+\Phi\kappa+2\langle E_{-},\kappa\rangle\kappa)\\
&\qquad +2\langle E_{-},\kappa\rangle\frac{\partial}{\partial t}\otimes dt-2\langle E_{-},\kappa\rangle dt\otimes \frac{\partial}{\partial t}.
\end{align*}
We extend a definition of generalized almost contact structure to admit above transformations.
\begin{defn}\label{gfacs}
A generalized $f$-almost contact structure on smooth manifold $M$ is a 4-tuple $(\Phi^{f},E_{+}^{f},E_{-}^{f},f)$, where $\Phi^{f}$ is an endomorphism of $TM\oplus T^{*}M$, $E_{\pm}^{f}$ are sections of $TM\oplus T^{*}M$ and $f$ is a $C^{\infty}$- function on $M$ which satisfy
\begin{align*}
&\Phi^{f}+(\Phi^{f})^{*}=0\\
&(\Phi^{f})^{2}=-id+E_{+}^{f}\otimes E_{-}^{f}+E_{-}^{f}\otimes E_{+}^{f},\\
&\Phi^{f} E_{+}^{f}=fE_{+}^{f},\qquad \Phi^{f} E_{-}^{f}=-fE_{-}^{f},\\
&2\langle E_{+}^{f},E_{-}^{f}\rangle=1+f^{2},\qquad \langle E_{\pm}^{f},E_{\pm}^{f}\rangle=0.
\end{align*}
\end{defn}

A definition of generalized almost contact structure is the special case of $f=0$.
We already have an example. 

\begin{exam}\label{constE-}
Let $(\Phi,E_{\pm})$ be a generalized almost contact structure and $\kappa \in T^{*}M$.
Then
$(\Phi-\kappa\otimes E_{-}+E_{-}\otimes\kappa,E_{+}+\Phi\kappa+2\langle E_{-},\kappa\rangle\kappa,E_{-},2\langle E_{-},\kappa\rangle)$
is a generalized $f$-almost contact structure.
\end{exam}

By an analogue to this example, we have

\begin{exam}\label{constE+}
Let $(\Phi,E_{\pm})$ be a generalized almost contact structure and $\kappa \in T^{*}M$.
Then
$(\Phi-\kappa\otimes E_{+}+E_{+}\otimes\kappa,E_{+},E_{-}+\Phi\kappa+2\langle E_{+},\kappa\rangle\kappa,-2\langle E_{+},\kappa\rangle)$
is a generalized $f$-almost contact structure.
\end{exam}

We generalize methods for constructing these examples.

\begin{defn}
Let $(\Phi^{f},E^{f}_{\pm},f)$ be a generalized $f$-almost contact structure and $\kappa \in T^{*}M$.
We define $\mathcal{K}_{-}(\kappa)$-deformation by
\begin{align*}
\mathcal{K}_{-}(\kappa)
\begin{pmatrix}
\Phi^{f},\\E^{f}_{+},\\E^{f}_{-},\\f
\end{pmatrix}=
\begin{pmatrix}
\Phi^{f}-\kappa\otimes E^{f}_{-}+E^{f}_{-}\otimes\kappa,\\
E^{f}_{+}+\Phi^{f}\kappa+2\langle E^{f}_{-},\kappa\rangle\kappa+f\kappa,\\
E^{f}_{-},\\
f+2\langle E^{f}_{-},\kappa\rangle
\end{pmatrix}
\end{align*}
and $\mathcal{K}_{+}(\kappa)$-deformation by
\begin{align*}
\mathcal{K}_{+}(\kappa)
\begin{pmatrix}
\Phi^{f},\\E^{f}_{+},\\E^{f}_{-},\\f
\end{pmatrix}=
\begin{pmatrix}
\Phi^{f}-\kappa\otimes E^{f}_{+}+E^{f}_{+}\otimes\kappa,\\
E^{f}_{+},\\
E^{f}_{-}+\Phi^{f}\kappa+2\langle E^{f}_{+},\kappa\rangle\kappa-f\kappa,\\
f-2\langle E^{f}_{+},\kappa\rangle
\end{pmatrix}.
\end{align*}
\end{defn}

By a simple calculation, we get

\begin{lem}\label{K-Bcom}
$\mathcal{K}_{\pm}(\kappa)$-deformations and $B$-field transformations are commutative.
\end{lem}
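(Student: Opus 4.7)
The plan is to verify commutativity slot by slot on the $4$-tuple $(\Phi^{f},E^{f}_{+},E^{f}_{-},f)$, using only two elementary facts about the $B$-field transform: \textbf{(i)} $e^{\pm B}$ acts as the identity on the cotangent summand $T^{*}M$, so $e^{\pm B}\kappa=\kappa$ for any $1$-form $\kappa$; and \textbf{(ii)} $e^{B}$ is orthogonal with respect to $\langle-,-\rangle$, so $\langle e^{B}A,C\rangle=\langle A,e^{-B}C\rangle$, which in particular yields $\langle e^{B}E^{f}_{\pm},\kappa\rangle=\langle E^{f}_{\pm},\kappa\rangle$. Consequently the scalar slot $f\mp 2\langle E^{f}_{\pm},\kappa\rangle$ is preserved regardless of order, and the passive slot ($E^{f}_{-}$ for $\mathcal{K}_{-}$, $E^{f}_{+}$ for $\mathcal{K}_{+}$) is simply transported by $e^{B}$ with no interaction with $\kappa$.

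For the active section slot, I would use \textbf{(i)} to write $e^{B}\Phi^{f}e^{-B}(\kappa)=e^{B}(\Phi^{f}\kappa)$, so that $\mathcal{K}_{-}(\kappa)$ applied after the $B$-field produces
\begin{equation*}
e^{B}E^{f}_{+}+e^{B}(\Phi^{f}\kappa)+2\langle E^{f}_{-},\kappa\rangle\,\kappa+f\kappa,
\end{equation*}
which, since $e^{B}\kappa=\kappa$, is exactly the image under $e^{B}$ of $E^{f}_{+}+\Phi^{f}\kappa+2\langle E^{f}_{-},\kappa\rangle\kappa+f\kappa$. The $\mathcal{K}_{+}(\kappa)$ case is analogous.

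The only endomorphism check is that the rank-one correction $\mp\kappa\otimes E^{f}_{\mp}\pm E^{f}_{\mp}\otimes\kappa$ intertwines correctly with conjugation by $e^{B}$. Using the convention $(A\otimes B)(C)=2\langle B,C\rangle A$ (forced by Definitions \ref{gac} and \ref{gfacs}), a one-line application of \textbf{(i)} and \textbf{(ii)} yields
\begin{equation*}
e^{B}(\kappa\otimes E^{f}_{\mp})e^{-B}=\kappa\otimes e^{B}E^{f}_{\mp},\qquad e^{B}(E^{f}_{\mp}\otimes\kappa)e^{-B}=e^{B}E^{f}_{\mp}\otimes\kappa,
\end{equation*}
so the full endomorphism slot becomes $e^{B}\Phi^{f}e^{-B}\mp\kappa\otimes e^{B}E^{f}_{\mp}\pm e^{B}E^{f}_{\mp}\otimes\kappa$ in either order. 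I do not anticipate any real obstacle; the content of the lemma is just the naturality of $\mathcal{K}_{\pm}$ in its data together with the fact that $\kappa$ itself is fixed by $e^{\pm B}$.
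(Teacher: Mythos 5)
Your verification is correct and is precisely the "simple calculation" the paper omits: the key observations that $e^{\pm B}\kappa=\kappa$, that $e^{B}$ is orthogonal (so $\langle e^{B}E^{f}_{\mp},\kappa\rangle=\langle E^{f}_{\mp},\kappa\rangle$), and that $e^{B}(A\otimes A')e^{-B}=e^{B}A\otimes e^{B}A'$ make every slot of the $4$-tuple agree in either order. No gaps; this matches the intended argument.
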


Also, we get

\begin{lem}\label{comK+-}
Let $(\Phi^{f},E^{f}_{\pm},f)$ be a generalized $f$-almost contact structure and $\alpha,\beta \in T^{*}M$.
Then we have
\begin{equation*}
\mathcal{K}_{\pm}(\beta)\left(\mathcal{K}_{\pm}(\alpha)(\Phi^{f},E^{f}_{\pm},f)\right)=\mathcal{K}_{\pm}(\alpha+\beta)(\Phi^{f},E^{f}_{\pm},f).
\end{equation*}
\end{lem}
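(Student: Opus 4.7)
The proof is a direct calculation from the definition of $\mathcal{K}_{\pm}$-deformation, treating the two cases symmetrically; I will outline the $\mathcal{K}_{-}$ case, with the $\mathcal{K}_{+}$ case being verified by the entirely analogous procedure (interchanging the roles of $E_{+}^{f}$ and $E_{-}^{f}$ and reversing the sign of the $f$-shift). My plan is to write $(\Phi', E_{+}', E_{-}', f') := \mathcal{K}_{-}(\alpha)(\Phi^{f}, E_{+}^{f}, E_{-}^{f}, f)$ and then compute $\mathcal{K}_{-}(\beta)(\Phi', E_{+}', E_{-}', f')$ component-by-component, matching each slot against the corresponding slot of $\mathcal{K}_{-}(\alpha+\beta)(\Phi^{f}, E_{+}^{f}, E_{-}^{f}, f)$.

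Three of the four components are immediate. Since $E_{-}' = E_{-}^{f}$ is unchanged by $\mathcal{K}_{-}(\alpha)$, and since $\kappa \mapsto -\kappa\otimes E_{-}^{f} + E_{-}^{f}\otimes\kappa$ is linear in $\kappa$, bilinearity of $\otimes$ gives the $\Phi$-component $\Phi^{f} - (\alpha+\beta)\otimes E_{-}^{f} + E_{-}^{f}\otimes(\alpha+\beta)$ without effort. The $E_{-}$-component is manifestly preserved. The $f$-component follows from linearity of $\langle E_{-}^{f}, \cdot\rangle$, giving $f + 2\langle E_{-}^{f}, \alpha+\beta\rangle$.

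The substantive step is the $E_{+}$-component. Expanding $E_{+}'' = E_{+}' + \Phi'\beta + 2\langle E_{-}', \beta\rangle\beta + f'\beta$ and substituting the definitions produces, on the one hand, the expected contribution $E_{+}^{f} + \Phi^{f}(\alpha+\beta) + 2\langle E_{-}^{f}, \alpha+\beta\rangle(\alpha+\beta) + f(\alpha+\beta)$, together with cross-terms originating from the deformation piece $(-\alpha\otimes E_{-}^{f} + E_{-}^{f}\otimes\alpha)\beta$ of $\Phi'\beta$. The crucial input is the vanishing $\langle\alpha,\beta\rangle = 0$ for $\alpha,\beta \in T^{*}M$, which kills the cross-term $(E_{-}^{f}\otimes\alpha)\beta$ and leaves exactly the right combination of $\langle E_{-}^{f}, \alpha\rangle$ and $\langle E_{-}^{f}, \beta\rangle$ multiples of $\alpha$ and $\beta$ to recombine into $2\langle E_{-}^{f}, \alpha+\beta\rangle(\alpha+\beta)$.

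The main obstacle is purely the bookkeeping in this last step: one must carefully unfold $\Phi'\beta$ using the convention for $A \otimes B$ acting as an endomorphism of $TM\oplus T^{*}M$, track the two scalar factors $\langle E_{-}^{f}, \alpha\rangle$ and $\langle E_{-}^{f}, \beta\rangle$ that appear symmetrically in the target but asymmetrically in the intermediate expression, and invoke $\langle\alpha,\beta\rangle = 0$ at precisely the right moment to restore the symmetry. Everything else is routine linear algebra, which is why the paper merely prefaces the statement with "by a simple calculation".
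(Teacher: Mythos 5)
Your approach is exactly the paper's: the paper offers no argument beyond ``by a simple calculation,'' and your component-by-component expansion (the first three slots by linearity, the $E_{+}$-slot by unfolding $\Phi'\beta$ and using $\langle\alpha,\beta\rangle=0$) is that calculation, with the correct conclusion in every slot. One labeling slip in the key step is worth fixing: under the paper's convention $(A\otimes B)(C)=2\langle A,C\rangle B$ (forced, e.g., by matching the matrix expansion of $\Psi(E_{+},E_{-})$ in Section 4), the cross-term annihilated by $\langle\alpha,\beta\rangle=0$ is $-(\alpha\otimes E^{f}_{-})\beta$, whereas $(E^{f}_{-}\otimes\alpha)\beta=2\langle E^{f}_{-},\beta\rangle\alpha$ \emph{survives} and is precisely the piece that, together with the $2\langle E^{f}_{-},\alpha\rangle\beta$ arising from $f'\beta$, completes $2\langle E^{f}_{-},\alpha+\beta\rangle(\alpha+\beta)$ --- with the opposite convention the surviving term would carry the wrong sign and the identity would fail, so identifying the correct term is not cosmetic.
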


Note that $\mathcal{K}_{+}(\alpha)$-deformations and $\mathcal{K}_{-}(\beta)$-deformations are not commutative.

We can construct any generalized $f$-almost contact structure from a generalized almost contact structure with $\mathcal{K}_{\pm}(\kappa)$-deformations.

\begin{prop}
For any generalized $f$-almost contact structure $(\Phi^{f},E^{f}_{\pm},f)$, there exists a generalized almost contact structure $(\Phi,E_{\pm})$ and 1-forms $\alpha,\beta \in T^{*}M$ such that
\begin{equation*}
(\Phi,E_{\pm},0)=\mathcal{K}_{-}(\beta)\left(\mathcal{K}_{+}(\alpha)(\Phi^{f},E^{f}_{\pm},f)\right).
\end{equation*}
\end{prop}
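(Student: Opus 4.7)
The plan is to reduce the proposition to solving a single scalar equation for smooth $1$-forms $\alpha,\beta$, and then to construct such $\alpha,\beta$ by a partition-of-unity argument.

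First I would track how the scalar component transforms under the composition. Writing $E^{f}_{\pm}=\xi^{f}_{\pm}+\eta^{f}_{\pm}$, and letting $\pi\colon T^{*}M\to TM$ be the $TM$-component of $\Phi^{f}|_{T^{*}M}$ (which is a bivector by $\Phi^{f}+(\Phi^{f})^{*}=0$), a direct computation from Definition \ref{gfacs} shows that $\mathcal{K}_{+}(\alpha)$ sends the scalar to $f-2\langle E^{f}_{+},\alpha\rangle=f-\alpha(\xi^{f}_{+})$ and produces a new $E_{-}$ whose vector part is $\xi^{f}_{-}+\pi(\alpha)$. The subsequent $\mathcal{K}_{-}(\beta)$ adds $\beta(\xi^{f}_{-}+\pi(\alpha))$ to the scalar, so the requirement that the final scalar vanish is the single equation
\[
\alpha(\xi^{f}_{+})-\beta(\xi^{f}_{-})-\beta(\pi(\alpha))=f.
\]
Once $\alpha,\beta$ are produced, the remaining axioms of Definition \ref{gac} follow automatically, because the $\mathcal{K}_{\pm}$-deformations preserve the axioms of Definition \ref{gfacs} and the $f=0$ specialization is exactly Definition \ref{gac}.

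Second, I would exploit the defining identity $2\langle E^{f}_{+},E^{f}_{-}\rangle=1+f^{2}\geq 1$, which in components reads $\eta^{f}_{-}(\xi^{f}_{+})+\eta^{f}_{+}(\xi^{f}_{-})\geq 1$ pointwise. Hence $\xi^{f}_{+}$ and $\xi^{f}_{-}$ never vanish simultaneously, and the open sets $U_{\pm}:=\{\xi^{f}_{\pm}\neq 0\}$ form an open cover of $M$. Take a smooth partition of unity $\{\rho_{+},\rho_{-}\}$ subordinate to $\{U_{+},U_{-}\}$, with $\mathrm{supp}(\rho_{\pm})$ strictly inside $U_{\pm}$ (vanishing on a neighborhood of $\partial U_{\pm}$), and fix an auxiliary Riemannian metric $g$. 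Then $\sigma_{\pm}:=g(\xi^{f}_{\pm},\cdot)/g(\xi^{f}_{\pm},\xi^{f}_{\pm})$ is a smooth $1$-form on $U_{\pm}$ with $\sigma_{\pm}(\xi^{f}_{\pm})=1$. Setting $\alpha=\rho_{+}f\,\sigma_{+}$ and $\beta_{0}=-\rho_{-}f\,\sigma_{-}$, extended by zero, yields smooth $1$-forms on $M$ with $\alpha(\xi^{f}_{+})-\beta_{0}(\xi^{f}_{-})=\rho_{+}f+\rho_{-}f=f$. Substituting $(\alpha,\beta_{0})$ into the equation leaves a residual $-\beta_{0}(\pi(\alpha))$, supported in $U_{+}\cap U_{-}$, which I would cancel by taking $\beta=\beta_{0}+\delta$ where $\delta$ solves $\delta(\xi^{f}_{-}+\pi(\alpha))=-\beta_{0}(\pi(\alpha))$, built by the same recipe as $\beta_{0}$.

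The main obstacle is the nonlinear coupling $\beta(\pi(\alpha))$: for the correction $\delta$ to exist smoothly, one needs $\xi^{f}_{-}+\pi(\alpha)$ to be nonvanishing on the support of $\beta_{0}(\pi(\alpha))\subset U_{+}\cap U_{-}$. This is arranged by choosing the partition of unity with a sufficiently narrow transition region, so that $\alpha$ is pointwise small on $\mathrm{supp}(\rho_{-})$ and $\xi^{f}_{-}+\pi(\alpha)$ stays close to the nowhere-vanishing $\xi^{f}_{-}$ on $U_{-}$. This small-amplitude perturbation closes the construction, and $(\Phi,E_{+},E_{-}):=\mathcal{K}_{-}(\beta)(\mathcal{K}_{+}(\alpha)(\Phi^{f},E^{f}_{\pm},f))$ is then the desired generalized almost contact structure.
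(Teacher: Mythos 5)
Your reduction of the problem to the single scalar equation $\alpha(\xi^{f}_{+})-\beta(\xi^{f}_{-})-\beta(\pi(\alpha))=f$ is correct and agrees with the first step of the paper's proof, but your method of solving that equation has a genuine gap at exactly the point you flag as the ``main obstacle.'' To define the correction $\delta$ you need $\xi^{f}_{-}+\pi(\alpha)$ to be nonvanishing on a neighborhood of the support of the residual right-hand side, which lives in $\mathrm{supp}(\rho_{-})\cap\{f\neq 0\}$. You claim this is arranged by a ``sufficiently narrow transition region'' making $\alpha$ pointwise small on $\mathrm{supp}(\rho_{-})$. That cannot work: since $\rho_{+}+\rho_{-}=1$, the function $\rho_{+}$ necessarily takes values arbitrarily close to $1$ on the overlap $\mathrm{supp}(\rho_{+})\cap\mathrm{supp}(\rho_{-})$ (near $\partial\,\mathrm{supp}(\rho_{-})$), so $\alpha=\rho_{+}f\sigma_{+}$ is of size comparable to $f\sigma_{+}$ on part of $\mathrm{supp}(\rho_{-})$ no matter how the partition is chosen; moreover $|\sigma_{+}|=1/|\xi^{f}_{+}|$ blows up near $\partial U_{+}$. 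Nothing prevents $\pi(\alpha)$ from cancelling $\xi^{f}_{-}$ at some points, so the existence of $\delta$ is not established and the construction does not close (and, being a perturbation argument, it would in any case need uniform control that is unavailable on a non-compact $M$).

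The paper disposes of both difficulties with two observations you missed. First, choosing $\beta=-\alpha$ kills the quadratic coupling outright, because $2\langle\Phi^{f}\alpha,\beta\rangle=-2\langle\Phi^{f}\alpha,\alpha\rangle=0$ by $\Phi^{f}+(\Phi^{f})^{*}=0$; the condition then collapses to the linear equation $2\langle E^{f}_{+}+E^{f}_{-},\alpha\rangle=\alpha(\xi^{f}_{+}+\xi^{f}_{-})=f$. Second, the axioms $2\langle E^{f}_{+},E^{f}_{-}\rangle=1+f^{2}$ and $\langle E^{f}_{\pm},E^{f}_{\pm}\rangle=0$ give $\langle E^{f}_{+}+E^{f}_{-},E^{f}_{+}+E^{f}_{-}\rangle=1+f^{2}$, i.e. $(\eta^{f}_{+}+\eta^{f}_{-})(\xi^{f}_{+}+\xi^{f}_{-})=1+f^{2}>0$. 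This is strictly stronger than your observation that $\xi^{f}_{+}$ and $\xi^{f}_{-}$ never vanish simultaneously: the single vector field $\xi^{f}_{+}+\xi^{f}_{-}$ is nowhere zero, and $\alpha=\tfrac{f}{1+f^{2}}(\eta^{f}_{+}+\eta^{f}_{-})$, $\beta=-\alpha$ solve the equation globally and explicitly, with no partition of unity and no smallness hypotheses. I recommend replacing your second step by this argument.
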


\begin{proof}
From a direct calculation, the fourth element of the generalized $f$-almost contact structure is
\begin{align*}
&f-2\langle E_{+}^{f},\alpha\rangle+2\langle E_{-}^{f}+\Phi^{f}\alpha+2\langle E_{+}^{F}\alpha\rangle \alpha -f\alpha,\beta\rangle\\
&=f-2\langle E_{+}^{f},\alpha\rangle+2\langle E_{-}^{f},\beta\rangle+\langle\Phi^{f}\alpha,\beta\rangle.
\end{align*}
If we assume that $\beta=-\alpha$, then we have
\begin{equation*}
f-2\langle E_{+}^{f}+E_{-}^{f},\alpha\rangle.
\end{equation*}
Since $\langle E_{+}^{f},E_{-}^{f}\rangle \neq 0$, there exists a 1-form $\alpha$ such that $2\langle E^{f}_{+}+E^{f}_{-},\alpha\rangle=f$.
\end{proof}

By a similar argument in section \ref{gss}, we define a bundle map $\Psi^{f} \colon TC(M)\oplus T^{*}C(M)\rightarrow TC(M)\oplus T^{*}C(M)$ by
\begin{align*}
\Psi^{f}(E^{f}_{+},E^{f}_{-})
&=E_{-}^{f}\otimes r\frac{\partial}{\partial r}-r\frac{\partial}{\partial r}\otimes E_{-}^{f}+E_{+}^{f}\otimes\frac{1}{r}dr\otimes-\frac{1}{r}dr\otimes E_{+}^{f}+f\frac{\partial}{\partial r}\otimes dr-f dr\otimes \frac{\partial}{\partial r}.
\end{align*}
Then we have two generalized almost complex structures $\mathcal{I}'$, $\mathcal{I}$ on $C(M)$,
\begin{align*}
\mathcal{I}'(\Phi^{f},E^{f}_{\pm},f)=\Phi^{f}+\Psi^{f}(E_{+}^{f},E_{-}^{f})
\end{align*}
and 
\begin{align*}
\mathcal{I}(\Phi^{f},E^{f}_{\pm},f)=R(\Phi^{f}+\Psi^{f}(E^{f}_{+},E^{f}_{-}))R^{-1}.
\end{align*}

From a proof of Proposition \ref{L_t-ver}, we have next proposition.

\begin{prop}\label{L_t}
There is a one-to-one correspondence between generalized $f$-almost contact structures $(\Phi^{f},E^{f}_{\pm},f)$ on $M$ and generalized almost complex structures $\mathcal{J}$ on $C(M)$ such that
\begin{align*}
&\mathcal{L}_{\frac{\partial}{\partial t}}\mathcal{J}=0.
\end{align*}
\end{prop}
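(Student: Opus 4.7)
The plan is to adapt the proof of Proposition~\ref{L_t-ver}, dropping the two auxiliary constraints $\mathcal{J}(\partial/\partial t),\mathcal{J}(dt)\in TM\oplus T^*M$ that were used there to force the scalar component to vanish. Removing those constraints should recover precisely the extra scalar degree of freedom encoded in the function $f$ in Definition~\ref{gfacs}.

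First, starting from a skew endomorphism $\mathcal{J}$ on $C(M)$ with $\mathcal{L}_{\partial/\partial t}\mathcal{J}=0$, I use the splitting $TC(M)\oplus T^*C(M)=(TM\oplus T^*M)\oplus(\mathbb{R}\tfrac{\partial}{\partial t}\oplus\mathbb{R}\,dt)$ together with $\mathcal{J}+\mathcal{J}^*=0$ to write
\[
\mathcal{J}=\mathcal{J}_M+A\otimes\tfrac{\partial}{\partial t}-\tfrac{\partial}{\partial t}\otimes A+B\otimes dt-dt\otimes B+h\tfrac{\partial}{\partial t}\otimes dt-h\,dt\otimes\tfrac{\partial}{\partial t},
\]
where $\mathcal{J}_M$, $A$, $B$, and $h$ are $t$-independent data on $M$. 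Unlike in Proposition~\ref{L_t-ver}, I do not require $h=0$.

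Next I impose $\mathcal{J}^2=-\mathrm{id}$. Testing this relation separately on a generic $V\in TM\oplus T^*M$, on $\tfrac{\partial}{\partial t}$, and on $dt$, and decomposing each result into its $TM\oplus T^*M$, $\tfrac{\partial}{\partial t}$, and $dt$ components, I obtain $\mathcal{J}_M^2=-\mathrm{id}+A\otimes B+B\otimes A$ together with $\mathcal{J}_M A=hA$, $\mathcal{J}_M B=-hB$, $\langle A,A\rangle=\langle B,B\rangle=0$, and $2\langle A,B\rangle=1+h^2$. The shift $1\mapsto 1+h^2$ is the only nontrivial point: it arises because the extra term $h\tfrac{\partial}{\partial t}\otimes dt-h\,dt\otimes\tfrac{\partial}{\partial t}$ contributes an additional $h^2\tfrac{\partial}{\partial t}$ (resp.\ $h^2\,dt$) when $\mathcal{J}$ is squared against $\tfrac{\partial}{\partial t}$ (resp.\ $dt$). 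Identifying $(\Phi^f,E^f_+,E^f_-,f)=(\mathcal{J}_M,A,B,h)$, this is exactly Definition~\ref{gfacs}.

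For the reverse direction, given $(\Phi^f,E^f_\pm,f)$ I set $\mathcal{J}=\mathcal{I}'(\Phi^f,E^f_\pm,f)=\Phi^f+\Psi^f(E^f_+,E^f_-)$. The invariance $\mathcal{L}_{\partial/\partial t}\mathcal{J}=0$ is manifest from the definition of $\Psi^f$, and running the same bookkeeping in reverse verifies $\mathcal{J}+\mathcal{J}^*=0$ and $\mathcal{J}^2=-\mathrm{id}$. The only real obstacle is keeping track of the $h$-terms consistently; this is routine once one is careful with the convention $(X\otimes Y)(V)=2\langle Y,V\rangle X$ forced by the normalization of $\langle-,-\rangle$.
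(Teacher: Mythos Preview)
Your argument is correct and is precisely the adaptation the paper intends: its entire proof is the single sentence ``From a proof of Proposition~\ref{L_t-ver}, we have next proposition,'' and you have carried out exactly that, dropping the two constraints $\mathcal{J}(\partial/\partial t),\mathcal{J}(dt)\in TM\oplus T^*M$ so that the scalar $h$ survives and the relations from $\mathcal{J}^2=-\mathrm{id}$ reproduce Definition~\ref{gfacs}. One small bookkeeping point: your identification $(E^f_+,E^f_-)=(A,B)$ is the transpose of the convention built into $\Psi^f$ (where $E^f_-$ sits in the $\partial/\partial t$-slot and $E^f_+$ in the $dt$-slot), so to make the forward and reverse maps literal inverses you should set $(E^f_+,E^f_-)=(B,A)$, matching the paper's choice in the proof of Proposition~\ref{L_t-ver}.
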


The construction of $\mathcal{K}_{-}(\kappa)$-deformation shows
\begin{lem}
Let $(\Phi^{f},E^{f}_{\pm},f)$ be a generalized $f$-almost contact structure and $\kappa \in T^{*}M$.
Then we have
\begin{equation*}
\begin{pmatrix}
1&0\\ 2rdr\wedge\kappa&1
\end{pmatrix}
\mathcal{I}(\Phi^{f},E_{\pm}^{f},f)
\begin{pmatrix}
1&0\\ -2rdr\wedge\kappa&1
\end{pmatrix}
=\mathcal{I}(\mathcal{K}_{-}(\kappa)(\Phi^{f},E_{\pm}^{f},f)).
\end{equation*}
\end{lem}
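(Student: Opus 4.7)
The plan is to reduce the problem via $R$-conjugation and then leverage the $f = 0$ calculation already explicit in Section~\ref{ext-gen-con}. Writing $B = (2/r)dr\wedge\kappa$, one checks directly that
\begin{equation*}
R^{-1}e^{2rdr\wedge\kappa}R = e^{B},
\end{equation*}
by computing on each of $TM$, $T\mathbb{R}_{>0}$, $T^{*}M$, $T^{*}\mathbb{R}_{>0}$ (with $R$ extended diagonally, multiplying vectors by $r^{-1}$ and $1$-forms by $r$ including the cone direction). Hence the lemma reduces to
\begin{equation*}
e^{B}\bigl(\Phi^{f}+\Psi^{f}(E^{f}_{+},E^{f}_{-})\bigr)e^{-B}=\Phi^{f,\kappa}+\Psi^{f^{\kappa}}(E^{f,\kappa}_{+},E^{f,\kappa}_{-}),
\end{equation*}
where $(\Phi^{f,\kappa},E^{f,\kappa}_{\pm},f^{\kappa})=\mathcal{K}_{-}(\kappa)(\Phi^{f},E^{f}_{\pm},f)$.

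For the reduced identity, decompose
\begin{equation*}
\Psi^{f}(E^{f}_{+},E^{f}_{-})=\Psi^{0}(E^{f}_{+},E^{f}_{-})+f\bigl(\partial/\partial t\otimes dt-dt\otimes\partial/\partial t\bigr).
\end{equation*}
The first summand, together with $\Phi^{f}$, is handled by the $f=0$ calculation at the start of Section~\ref{ext-gen-con} (applied with $\Phi^{f}$ and $E^{f}_{\pm}$ in place of $\Phi$ and $E_{\pm}$), yielding the base correction $\Phi^{f,\kappa}$, the partial shift $E^{f}_{+}\to E^{f}_{+}+\Phi^{f}\kappa+2\langle E^{f}_{-},\kappa\rangle\kappa$, and the shift $f\to f^{\kappa}=f+2\langle E^{f}_{-},\kappa\rangle$ in the $\partial/\partial t\otimes dt - dt\otimes\partial/\partial t$ slot.

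For the remaining $f$-dependent piece, one uses orthogonality of $e^{B}$ with $e^{B}(\partial/\partial t)=\partial/\partial t+\kappa$ and $e^{B}(dt)=dt$ to obtain
\begin{equation*}
e^{B}\bigl(\partial/\partial t\otimes dt-dt\otimes\partial/\partial t\bigr)e^{-B}=\partial/\partial t\otimes dt-dt\otimes\partial/\partial t+\kappa\otimes dt-dt\otimes\kappa.
\end{equation*}
Multiplying by $f$, the extra $f\kappa\otimes dt-dt\otimes(f\kappa)$ combines with $E^{f}_{+}\otimes dt-dt\otimes E^{f}_{+}$ from the first summand to shift $E^{f}_{+}$ further by $f\kappa$, producing the full $E^{f,\kappa}_{+}=E^{f}_{+}+\Phi^{f}\kappa+2\langle E^{f}_{-},\kappa\rangle\kappa+f\kappa$ as prescribed by $\mathcal{K}_{-}(\kappa)$; assembling the two pieces gives the required identity.

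The main obstacle is careful bookkeeping across the six summands of $\Psi^{f}$ together with consistent use of the wedge and pairing conventions. The crucial algebraic input making the $f$-correction line up is the eigen-relation $\Phi^{f}E^{f}_{\pm}=\pm fE^{f}_{\pm}$ from Definition~\ref{gfacs}, while the skew-adjointness $\Phi^{f}+(\Phi^{f})^{*}=0$ is needed implicitly in the $f=0$ step (it forces $\langle\kappa,\Phi^{f}\kappa\rangle=0$, preventing a spurious quadratic-in-$\kappa$ contribution in the double-$B$ term).
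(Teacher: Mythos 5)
Your proof is correct and follows the route the paper intends: the paper offers no written proof beyond ``the construction of $\mathcal{K}_{-}(\kappa)$-deformation shows,'' i.e.\ it relies on the conjugation computation displayed at the start of Section~\ref{ext-gen-con}, and your argument makes that precise by first checking $R^{-1}e^{2rdr\wedge\kappa}R=e^{\frac{2}{r}dr\wedge\kappa}$ and then splitting off the $f\,(\partial/\partial t\otimes dt-dt\otimes\partial/\partial t)$ term. One small inaccuracy in your closing remark: the eigen-relation $\Phi^{f}E^{f}_{\pm}=\pm fE^{f}_{\pm}$ is not actually used anywhere in this operator identity --- the extra $f\kappa$ in $E^{f,\kappa}_{+}$ comes entirely from conjugating the $f$-dependent summand, exactly as your own displayed computation of $e^{B}(\partial/\partial t\otimes dt-dt\otimes\partial/\partial t)e^{-B}$ shows, so the only structural inputs are skew-adjointness of $\Phi^{f}$ and the bookkeeping you describe.
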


On the other hand, $\mathcal{K}_{+}(\kappa)$-deformation means next theorem.

\begin{thm}\label{Ka-ga}
Let $g$ be a Riemannian metric on $M$ and $\alpha \in T^{*}M$.
Then
\begin{equation*}
G=
\begin{pmatrix}
0&g^{-1}\\ g&0
\end{pmatrix}
\end{equation*}
is a generalized Riemannian metric on $M$ and
\begin{equation*}
\tilde{G}_{\alpha}
=
G+
\begin{pmatrix}
0&\substack{-g^{-1}\alpha\otimes\frac{\partial}{\partial t}-\frac{\partial}{\partial t}\otimes g^{-1}\alpha\\
+(1+g^{-1}(\alpha,\alpha))\frac{\partial}{\partial t}\otimes\frac{\partial}{\partial t}}\\
(\alpha+dt)\otimes(\alpha+dt)&0
\end{pmatrix}
\end{equation*}
is a generalized Reimannian metric on $C(M)$.
A generalized almost complex structure $\mathcal{I}'$ constructed by a generalized $f$-structure $(\Phi^{f},E^{f}_{\pm},f)$ satisfies
\begin{equation*}
\tilde{G}_{\alpha}=-\mathcal{I}'\tilde{G}_{\alpha}\mathcal{I}'
=-(\Phi^{f}+\Psi^{f}(E_{+}^{f},E_{-}^{f}))\tilde{G}_{\alpha}(\Phi^{f}+\Psi^{f}(E_{+}^{f},E_{-}^{f}))
\end{equation*}
if and only if
there exists a generalized almost contact metric structure $(G,\Phi,E_{\pm})$ such that
\begin{equation*}
(\Phi^{f},E^{f}_{\pm},f)=\mathcal{K}_{+}(\alpha)(\Phi,E_{\pm},0).
\end{equation*}
\end{thm}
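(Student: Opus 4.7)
The approach is to build an orthogonal automorphism $\mathcal{O}_\alpha$ of $TC(M)\oplus T^*C(M)$ that simultaneously (i) conjugates the product-type generalized Riemannian metric $\tilde{G}_0$ coming from $g+dt\otimes dt$ on $C(M)$ into $\tilde{G}_\alpha$, and (ii) implements the $\mathcal{K}_+(\alpha)$-deformation on the associated generalized almost complex structure $\mathcal{I}'(\Phi,E_\pm,0)=\Phi+\Psi(E_+,E_-)$. Once these two conjugation identities are set up, the theorem reduces to the $\alpha=0$ case, which is a direct calculation. Concretely, I define $\mathcal{O}_\alpha$ by
\begin{equation*}
\mathcal{O}_\alpha(X)=X-\alpha(X)\partial_t,\ \ \mathcal{O}_\alpha(\partial_t)=\partial_t,\ \ \mathcal{O}_\alpha(\beta)=\beta,\ \ \mathcal{O}_\alpha(dt)=dt+\alpha,
\end{equation*}
for $X\in TM$ and $\beta\in T^*M$. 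Evaluation on basis elements shows $\mathcal{O}_\alpha$ preserves the canonical pairing and satisfies $\mathcal{O}_\alpha\tilde{G}_0\mathcal{O}_\alpha^{-1}=\tilde{G}_\alpha$; in particular $\tilde{G}_\alpha$ is a generalized Riemannian metric.

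Since $\mathcal{O}_\alpha$ is orthogonal, a rank-one tensor $A\otimes B$ conjugates to $\mathcal{O}_\alpha(A)\otimes\mathcal{O}_\alpha(B)$, which handles the $\Psi$-part by direct substitution. The genuinely nontrivial step is the conjugation of $\Phi$: writing $\mathcal{O}_\alpha=\mathrm{id}+N$ with $N^2=0$, expanding, and using $\Phi+\Phi^*=0$ in the forms $\alpha(\pi_{TM}\Phi V)=-(\pi_{T^*M}\Phi\alpha)(V)$ and $\langle\alpha,\Phi\alpha\rangle=0$, one obtains the clean identity
\begin{equation*}
\mathcal{O}_\alpha\Phi\mathcal{O}_\alpha^{-1}=\Phi+\Phi\alpha\otimes\partial_t-\partial_t\otimes\Phi\alpha.
\end{equation*}
Adding the transformed $\Psi$-part and comparing to $\Phi^f+\Psi^f(E_+^f,E_-^f)$ for $(\Phi^f,E_\pm^f,f)=\mathcal{K}_+(\alpha)(\Phi,E_\pm,0)$, every term matches: the correction $-\alpha\otimes E_++E_+\otimes\alpha$ of $\Phi^f$ and the tail $f\partial_t\otimes dt-f\,dt\otimes\partial_t$ of $\Psi^f$ come from the $dt\mapsto dt+\alpha$ part of $\mathcal{O}_\alpha$ using $\alpha(\xi_+)=-f$, while the $\Phi\alpha$ contribution to $E_-^f$ comes from the $\Phi$-conjugation above.

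With both identities in hand, the compatibility $\tilde{G}_\alpha=-\mathcal{I}'\tilde{G}_\alpha\mathcal{I}'$ is conjugate-equivalent (via $\mathcal{O}_\alpha^{-1}$) to the $\alpha=0$ statement for $\mathcal{I}'(\Phi,E_\pm,0)$. The $\alpha=0$ case follows by expanding $\tilde{G}_0\mathcal{I}'=\mathcal{I}'\tilde{G}_0$ on $X\in TM$, $\beta\in T^*M$, $\partial_t$ and $dt$, using $\Phi(\partial_t)=\Phi(dt)=0$; it reduces to $G\Phi=\Phi G$ and $GE_\pm=E_\mp$, which together with $\Phi^2=-\mathrm{id}+E_+\otimes E_-+E_-\otimes E_+$ are equivalent to $-\Phi G\Phi=G-E_+\otimes E_+-E_-\otimes E_-$, the defining identity of Definition \ref{gacm}. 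For the forward implication, one additionally needs that the given $(\Phi^f,E_\pm^f,f)$ lies in the image of $\mathcal{K}_+(\alpha)$: comparing $dt$-components of $\tilde{G}_\alpha(\mathcal{I}'\partial_t)=\mathcal{I}'(\tilde{G}_\alpha\partial_t)$ forces $f=-2\langle E_+^f,\alpha\rangle$, and then by Lemma \ref{comK+-} the quadruple $(\Phi,E_\pm,0):=\mathcal{K}_+(-\alpha)(\Phi^f,E_\pm^f,f)$ has vanishing fourth component and is a genuine generalized almost contact structure.

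The main technical obstacle is the conjugation identity for $\Phi$. Unlike the parallel $\mathcal{K}_-(\kappa)$ statement in the previous lemma, which is realized by a genuine closed $B$-field transform on $C(M)$, the map $\mathcal{O}_\alpha$ is not of pure $B$-type but mixes $TM$ with $\mathbb{R}\partial_t$; so one must carefully track and match every $\partial_t$- and $\alpha$-cross-term produced by conjugating each summand of $\Phi+\Psi(E_+,E_-)$, and verify — crucially via $\Phi+\Phi^*=0$ — that they assemble exactly into the extra $\Phi\alpha$, $-\alpha\otimes E_+$, and $f$-pieces appearing on the $\mathcal{K}_+(\alpha)$-deformed side.
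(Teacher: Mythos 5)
Your proof is correct, but it takes a genuinely different route from the paper. The paper proves the equivalence by brute force: it expands $\tilde{G}_{\alpha}=-\mathcal{I}'\tilde{G}_{\alpha}\mathcal{I}'$ into four component equations (along $TM\oplus T^{*}M$, $\frac{\partial}{\partial t}$ and $dt$), and manipulates them to extract $2\langle E_{+}^{f},\alpha\rangle=-f$ and $\Phi^{f}\alpha=-GE_{+}^{f}+E_{-}^{f}$, from which it reads off that $\mathcal{K}_{+}(-\alpha)(\Phi^{f},E^{f}_{\pm},f)$ together with $G$ is a generalized almost contact metric structure. You instead exhibit the orthogonal automorphism $\mathcal{O}_{\alpha}$ (the natural lift of the tangent-bundle shear $X\mapsto X-\alpha(X)\frac{\partial}{\partial t}$) and prove the two conjugation identities $\mathcal{O}_{\alpha}\tilde{G}_{0}\mathcal{O}_{\alpha}^{-1}=\tilde{G}_{\alpha}$ and $\mathcal{O}_{\alpha}\,\mathcal{I}'(\Phi,E_{\pm},0)\,\mathcal{O}_{\alpha}^{-1}=\mathcal{I}'(\mathcal{K}_{+}(\alpha)(\Phi,E_{\pm},0))$, reducing everything to the transparent $\alpha=0$ case. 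I checked both identities against the paper's tensor convention $(A\otimes B)(C)=2\langle A,C\rangle B$ and they hold; your reduction of the $\alpha=0$ compatibility to $G\Phi=\Phi G$, $GE_{\pm}=E_{\mp}$ and hence to Definition \ref{gacm} is also valid. Your approach buys more: it explains \emph{why} $\tilde{G}_{\alpha}$ is a generalized Riemannian metric (the paper merely asserts this), and it exposes $\mathcal{K}_{+}(\alpha)$ as the shadow of a concrete orthogonal gauge transformation of $TC(M)\oplus T^{*}C(M)$, in exact parallel with the realization of $\mathcal{K}_{-}(\kappa)$ by the $B$-field $2rdr\wedge\kappa$. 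One point you should make explicit: for the forward implication you conjugate a structure with \emph{a priori unknown} $f$, so you need the identity $\mathcal{O}_{\kappa}\,\mathcal{I}'(\Phi^{f},E^{f}_{\pm},f)\,\mathcal{O}_{\kappa}^{-1}=\mathcal{I}'(\mathcal{K}_{+}(\kappa)(\Phi^{f},E^{f}_{\pm},f))$ for arbitrary $f$, not just for $f=0$ as you verify; the extra terms coming from conjugating $f(\frac{\partial}{\partial t}\otimes dt-dt\otimes\frac{\partial}{\partial t})$ and from $\mathcal{O}_{\kappa}(E^{f}_{+})=E^{f}_{+}-2\langle E^{f}_{+},\kappa\rangle\frac{\partial}{\partial t}$ do assemble correctly, so this is a routine extension of your computation rather than a gap, but it should be recorded.
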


\begin{proof}
From $\tilde{G}_{\alpha}=-\mathcal{I}'\tilde{G}_{\alpha}\mathcal{I}'$, we get next equations
\begin{equation}\label{ga-m}
\begin{split}
&-G-\alpha\otimes\alpha\\
&=
\Phi^{f}G\Phi^{f}-\Phi^{f}\alpha\otimes\Phi^{f}\alpha+E_{-}^{f}\otimes\Phi^{f}\alpha-E_{+}^{f}\otimes\Phi^{f}G_{\alpha}+\Phi^{f}\alpha\otimes E_{-}^{f}\\
&\quad-E_{-}^{f}\otimes E_{-}^{f}-\Phi^{f}G\alpha\otimes E_{+}^{f}-(1+g^{-1}(\alpha,\alpha))E_{+}^{f}\otimes E_{+}^{f}
\end{split}
\end{equation}
\begin{equation}\label{ga-mdt}
\begin{split}
&\alpha\\
&=\Phi^{f}GE_{+}^{f}+2\langle E_{+}^{f},\alpha\rangle\Phi^{f}\alpha-2\langle E_{+}^{f},\alpha\rangle E_{-}^{f}+2\langle E_{+}^{f},G\alpha\rangle E_{+}^{f}\\
&\quad +f\Phi^{f}\alpha-fE_{-}^{f}
\end{split}
\end{equation}
\begin{equation}\label{ga-ddt}
\langle E_{-}^{f},GE_{+}^{f}\rangle+2\langle\alpha,E_{+}^{f}\rangle\langle\alpha,E_{-}^{f}\rangle+f\langle E_{-}^{f},\alpha\rangle+f\langle G\alpha,E_{+}^{f}\rangle=0
\end{equation}
\begin{equation}\label{ga-dt}
2\langle E_{+}^{f},GE_{+}^{f}\rangle+4\langle\alpha,E_{+}^{f}\rangle^{2}+2f\langle E_{+}^{f},\alpha\rangle+2f\langle\alpha,E_{+}^{f}\rangle+f^{2}=1
\end{equation}

From $\langle$(\ref{ga-mdt})$,E^{f}_{-}\rangle-f$(\ref{ga-ddt}), we have
\begin{equation}\label{ga-ge+e-}
\langle E_{+}^{f},G\alpha\rangle=\langle\alpha,E_{-}^{f}\rangle.
\end{equation}
From $\langle$(\ref{ga-mdt})$,E^{f}_{+}\rangle+f/2$(\ref{ga-dt}), we have
\begin{equation}\label{ga-e+a}
2\langle E_{+}^{f},\alpha\rangle=-f.
\end{equation}
We substitute (\ref{ga-ge+e-}) and (\ref{ga-e+a}) into (\ref{ga-ddt}) and (\ref{ga-mdt}), then we have
\begin{equation}\label{ga-G+}
\langle E_{-}^{f},GE_{+}^{f}\rangle=-f\langle G\alpha,E_{+}^{f}\rangle=-f\langle\alpha,E_{-}^{f}\rangle.
\end{equation}
\begin{equation}\label{ga-alpha}
\begin{split}
\alpha&=
\Phi^{f}GE_{+}^{f}-f\Phi^{f}\alpha+fE_{-}^{f}+2\langle E_{-}^{f},\alpha\rangle E_{+}^{f}+f\Phi^{f}\alpha-fE_{-}^{f}\\
&=\Phi^{f}GE_{+}^{f}+2\langle E_{-}^{f},\alpha\rangle E_{+}^{f}
\end{split}
\end{equation}
From (\ref{ga-G+}) and (\ref{ga-alpha}), we get
\begin{equation}\label{ga-phialpha}
\Phi^{f}\alpha=-GE_{+}^{f}+E_{-}^{f}.
\end{equation}
(\ref{ga-m}), (\ref{ga-e+a}) and (\ref{ga-phialpha}) shows that
\begin{equation*}
\mathcal{K}_{+}(-\alpha)(\Phi^{f},E^{f}_{\pm},f)
\end{equation*}
and $G$ become a generalized almost contact metric structure.
By Lemma \ref{comK+-}, we have a generalized almost contact metric structure $(G,\Phi,E_{\pm})$ such that
\begin{equation*}
(\Phi^{f},E^{f}_{\pm},f)=\mathcal{K}_{+}(\alpha)(\Phi,E_{\pm},0),
\end{equation*}
and vice versa.
\end{proof}

By Lemma \ref{K-Bcom} and Theorem \ref{Ka-ga}, we get

\begin{cor}
Let $(G,\Phi,E_{\pm})$ be a generalized almost contact metric structure.
Since there is a two-form $B\in \wedge^{2}T^{*}M$ and a Riemannian metric $g$ such that
\begin{equation*}
G=
e^{B}
\begin{pmatrix}
0&g^{-1}\\ g&0
\end{pmatrix}
e^{-B},
\end{equation*}
two generalized almost complex structures $\mathcal{I}'(\mathcal{K}_{+}(\alpha)(\Phi,E_{\pm},0))$ and $\mathcal{I}'(\mathcal{K}_{+}(\alpha)(G\Phi,GE_{\pm},0))$ are commutative and
\begin{equation*} 
-\mathcal{I}'(\mathcal{K}_{+}(\alpha)(\Phi,E_{\pm},0))\circ \mathcal{I}'(\mathcal{K}_{+}(\alpha)(G\Phi,GE_{\pm},0))
\end{equation*}
is a generalized Riemannian metric on $C(M)$.
\end{cor}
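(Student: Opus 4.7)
The plan is to prove the single algebraic identity
\[
\mathcal{J}_{2}=\tilde{G}_{\alpha}\mathcal{J}_{1},\qquad\mathcal{J}_{1}:=\mathcal{I}'(\mathcal{K}_{+}(\alpha)(\Phi,E_{\pm},0)),\quad\mathcal{J}_{2}:=\mathcal{I}'(\mathcal{K}_{+}(\alpha)(G\Phi,GE_{\pm},0)),
\]
from which both the commutativity and the generalized Riemannian metric assertion will follow immediately. The two tools flagged just before the statement, Lemma \ref{K-Bcom} and Theorem \ref{Ka-ga}, enter as follows. The remark after Definition \ref{gacm} gives that $(G,G\Phi,GE_{\pm})=(G,\Phi G,E_{\mp})$ is again a generalized almost contact metric structure, so Theorem \ref{Ka-ga} applies to it as well as to $(G,\Phi,E_{\pm})$; since $\tilde{G}_{\alpha}$ depends only on the metric $G$ and on $\alpha$, both applications produce preservation of the \emph{same} $\tilde{G}_{\alpha}$, that is $\tilde{G}_{\alpha}=-\mathcal{J}_{i}\tilde{G}_{\alpha}\mathcal{J}_{i}$, equivalently $\tilde{G}_{\alpha}\mathcal{J}_{i}=\mathcal{J}_{i}\tilde{G}_{\alpha}$, for $i=1,2$. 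When $G$ is not already in the off-diagonal form $G_{0}$, Lemma \ref{K-Bcom} is what lets me commute $\mathcal{K}_{+}(\alpha)$ past $e^{\pm B}$ and reduce the hypotheses of Theorem \ref{Ka-ga} to the standard case $G=G_{0}$, transferring the conclusion back by the same $B$-field conjugation.

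To establish the identity $\mathcal{J}_{2}=\tilde{G}_{\alpha}\mathcal{J}_{1}$ itself, I unwind the definitions of $\mathcal{K}_{+}(\alpha)$ on each tuple and of $\mathcal{I}'$ on $C(M)$, then compare the two sides block by block in the decomposition $TC(M)\oplus T^{*}C(M)=TM\oplus T^{*}M\oplus\mathbb{R}\partial_{t}\oplus\mathbb{R}dt$. On the $TM\oplus T^{*}M$ block the identity collapses to the relations $G\Phi=\Phi G$ and $GE_{\pm}=E_{\mp}$. For the transverse $\Psi^{f}$-contribution, the only $C(M)$-factors appearing are $\partial_{t}$ and $dt$, which are essentially interchanged by $\tilde{G}_{\alpha}$; this matches precisely the swap $E_{+}\leftrightarrow E_{-}$ built into the passage from $\mathcal{J}_{1}$ to $\mathcal{J}_{2}$. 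The $f$-correction terms introduced by $\mathcal{K}_{+}(\alpha)$ combine symmetrically as well, since $\langle GE_{+},\alpha\rangle=\langle E_{-},\alpha\rangle$ and $\langle GE_{-},\alpha\rangle=\langle E_{+},\alpha\rangle$.

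Given the identity, the rest is automatic. Using $\mathcal{J}_{1}^{2}=-\mathrm{id}$ together with $[\mathcal{J}_{1},\tilde{G}_{\alpha}]=0$,
\[
\mathcal{J}_{1}\mathcal{J}_{2}=\mathcal{J}_{1}\tilde{G}_{\alpha}\mathcal{J}_{1}=\tilde{G}_{\alpha}\mathcal{J}_{1}^{2}=-\tilde{G}_{\alpha}=\mathcal{J}_{2}\mathcal{J}_{1},
\]
so $\mathcal{J}_{1}$ and $\mathcal{J}_{2}$ commute, and $-\mathcal{J}_{1}\mathcal{J}_{2}=\tilde{G}_{\alpha}$ is a generalized Riemannian metric on $C(M)$ by Theorem \ref{Ka-ga}. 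The main obstacle is the middle paragraph: each block comparison is routine, but the bookkeeping is bulky, and the $B$-field reduction to the standard metric $G_{0}$ via Lemma \ref{K-Bcom} is what keeps the calculation tractable.
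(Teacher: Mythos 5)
Your argument is correct, and it supplies precisely the step the paper leaves unstated: the corollary is asserted with only the citation of Lemma \ref{K-Bcom} and Theorem \ref{Ka-ga}, and those two results by themselves only yield that each of $\mathcal{J}_{1}$ and $\mathcal{J}_{2}$ \emph{separately} commutes with $\tilde{G}_{\alpha}$, which is not enough to conclude that they commute with each other or that $-\mathcal{J}_{1}\mathcal{J}_{2}$ is a generalized Riemannian metric. Your identity $\mathcal{J}_{2}=\tilde{G}_{\alpha}\mathcal{J}_{1}$ is exactly the missing link, and it does hold: on the block $TM\oplus T^{*}M$ it reduces to $G\Phi=\Phi G$, $GE_{\pm}=E_{\mp}$ and the adjointness identity $\langle G\Phi\alpha,A\rangle=-\langle\alpha,G\Phi A\rangle$, while on $\frac{\partial}{\partial t}$ and $dt$ the corrections produced by $\mathcal{K}_{+}(\alpha)$ (the value $f=-2\langle E_{+},\alpha\rangle$ and the shift $E_{-}^{f}=E_{-}+\Phi\alpha+2\langle E_{+},\alpha\rangle\alpha$) cancel against the $\alpha$-dependent blocks of $\tilde{G}_{\alpha}$, using $\langle\alpha,\alpha\rangle=0$ and $\langle\alpha,G\Phi\alpha\rangle=0$; I have checked these cases and they close up as you claim. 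Once the identity is in place, your concluding display is a clean derivation of both assertions and in fact identifies $-\mathcal{J}_{1}\mathcal{J}_{2}=\tilde{G}_{\alpha}$ explicitly, which is sharper than the corollary as stated. The reduction of a general metric $G=e^{B}G_{0}e^{-B}$ to the off-diagonal case via Lemma \ref{K-Bcom} is also the right move; the only point worth recording there is that the cone metric $\tilde{G}_{\alpha}$ must then be conjugated by the pullback of $e^{B}$ to $C(M)$ as well, so that Theorem \ref{Ka-ga} is applied to the transformed data and the conclusion transported back.
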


Now we can define a generalized $f$-almost contact metric structure and a generalized $f$-Sasakian structure.

\begin{defn}\label{f-gacm}
Let $(\Phi^{f},E^{f}_{\pm},f)$ be a generalized $f$-almost contact structure.
If there exists a generalized almost contact metric structure $(G,\Phi,E_{\pm})$ and two $1$-forms $\alpha,\beta \in T^{*}M$ such that 
\begin{equation*}
(\Phi^{f},E^{f}_{\pm},f)=
\mathcal{K}_{-}(\beta)\left(\mathcal{K}_{+}(\alpha)(\Phi,E_{\pm},0)\right),
\end{equation*}
then we called $(G,\alpha,\beta,\Phi^{f},E^{f}_{\pm},f)$ a generalized $f$-almost contact metric structure.
\end{defn}

\begin{defn}\label{f-gs}
Let $(G,\alpha,\beta,\Phi^{f},E^{f}_{\pm},f)$ be a generalized $f$-almost contact metric structure.
Then there is a generalized almost contact metric structure $(G,\Phi,E_{\pm})$ such that
\begin{equation*}(\Phi^{f},E^{f}_{\pm},f)=
\mathcal{K}_{-}(\beta)\circ\mathcal{K}_{+}(\alpha)(\Phi,E_{\pm},0).
\end{equation*}
If two generalized almost complex structures
\begin{align*}
&\mathcal{I}(\mathcal{K}_{-}(\beta)\circ\mathcal{K}_{+}(\alpha)(\Phi,E_{\pm},0))=\mathcal{I}(\Phi^{f},E_{\pm}^{f},f)\\
&\mathcal{I}(\mathcal{K}_{-}(\beta)\circ\mathcal{K}_{+}(\alpha)(G\Phi,GE_{\pm},0))
\end{align*}
are integral,
we called $(G,\alpha,\beta,\Phi^{f},E^{f}_{\pm},f)$ a generalized $f$-Sasakian structure.
\end{defn}

\begin{rem}
From the view point of Definition \ref{f-gs}, we can express that 
Definition \ref{gs} is the special case of $\alpha=\beta=0$.
The Sasakian structure due to Vaisman is the special case of $\alpha=0$.
\end{rem}

On compact manifolds which have more than five dimensions, we don't know whether there are examples of generalized Sasakian structures which are not pairs of Sasakian structures.
On the other hand, we can get non-trivial examples of generalized $f$-Sasakian structures by transformations of $3$-Sasakian structures which is similar to get non-trivial examples of generalized K\"ahler structures by transformations of hyperk\"ahler structures.
However, we need a notion of ``ordinary'' $f$-almost contact structure to study a generalized $f$-almost contact structure in detail.


\end{document}